\documentclass[9pt]{extarticle}

\usepackage{amsmath, graphicx}
\usepackage{amssymb}
\usepackage{amsfonts}
\usepackage{amscd}
\usepackage{xcolor}  
\usepackage{amsthm}
\usepackage{tikz}
\usepackage[top=1.12in, bottom=1.27in, left=1.38in, right=1.38in]{geometry}
\usepackage{layout}
\usepackage{makecell}
\usepackage{multirow}

\usepackage[hidelinks,draft=false]{hyperref}
\usepackage{bookmark}

\bookmarksetup{
  open,
  numbered,
  addtohook={%
    \ifnum\bookmarkget{level}>2 
      \renewcommand*{\numberline}[1]{}%
    \fi
  },
}

\usepackage{fancybox}
\usepackage{dsfont}
\usepackage{chngcntr}
\counterwithin*{equation}{section}
\usepackage{bm}
\usepackage{mathtools} 
\usepackage{stmaryrd} 
\usepackage{extarrows} 
\usepackage{cancel}

\input{xypic}
\xyoption{all}

\usepackage{threeparttable}
\usepackage{array}
\usepackage{booktabs}
\usepackage{blindtext}
\usepackage{mathrsfs}  

\usepackage{enumitem}
\usepackage{bbm} 
\usepackage{setspace} 
\usepackage{scalerel} 

\usepackage[normalem]{ulem} 

\usepackage{adjustbox}

\renewcommand{\theequation}{\arabic{section}.\arabic{equation}}
\newtheorem{theorem}{Theorem}[section]
\newtheorem{prop}[theorem]{Proposition}

\newtheorem{lemma}[theorem]{Lemma}
\newtheorem{cor}[theorem]{Corollary}

\newtheorem{defn}[theorem]{Definition}

\newtheorem{remark}[theorem]{Remark}
\newenvironment{rem}{\begin{remark}\rm}{\end{remark}}

\newtheorem{facts}[theorem]{Fact}

\newtheorem{example}[theorem]{Example}

\newtheorem{exercise}[theorem]{Exercise}

\newtheorem{terminology}[theorem]{Terminology}

\newtheorem{notation}[theorem]{Notation}

\newtheorem{observation}[theorem]{Observation}

\newtheorem{question}[theorem]{Question}
\newenvironment{que}{\begin{question}\rm}{\end{question}}

\makeatletter
\DeclareRobustCommand*\uell{\mathpalette\@uell\relax}
\newcommand*\@uell[2]{
  \setbox0=\hbox{$#1\ell$}
  \setbox1=\hbox{\rotatebox{15}{$#1\ell$}}
  \dimen0=\wd0 \advance\dimen0 by -\wd1 \divide\dimen0 by 2
  \mathord{\lower 0.1ex \hbox{\kern\dimen0\unhbox1\kern\dimen0}}
}

\def\a{\alpha}
\def\b{\beta}
\def\ga{\gamma}
\def\d{\delta}
\def\la{\lambda}
\def\s{\sigma}
\def\t{\theta}

\def\ep{\epsilon}

\def\L{\Lambda}
\def\Ga{\Gamma}
\def\Om{\Omega}
\def\T{\Theta}

\def\CC{\mathcal C}

\def\D{\mathcal D}
\def\A{\mathcal A}

\def\C{\mathbb C}

\def\Z{\mathbb Z}
\def\R{\mathbb R}

\def\S{\mathbb S}

\def\on{\operatorname}

\def\dim{\on{dim}}
\def\rank{\on{rank}}
\def\lb{\langle}
\def\rb{\rangle}

\def\xr{\xrightarrow}

\def\ov{\overline}

\def\non{\noindent}
\def\ds{\displaystyle}

\def\wt{\widetilde}
\def\wh{\widehat}

\def\p{\prime}

\def\spinc{ {\on{spin}^{{\rm c}} }}

\def\sl{\mathfrak{sl}}

\def\su{{ \frak{su} }}

\def\Cl{\on{Cl}}

\def\Hom{\on{Hom}}

\def\End{\on{End}}
\def\Sym{\on{Sym}}

\def\c{{ \rm c}}

\def\ind{\on{ind}}
\def\tr{\on{tr}}

\def\pa{\partial}

\def\ul{\underline}
\def\ov{\overline}

\newcommand{\sdfrac}[2]{\mbox{\small$\displaystyle\frac{#1}{#2}$}}
\newcommand{\fdfrac}[2]{\mbox{\footnotesize$\displaystyle\frac{#1}{#2}$}}

\def\w{\wedge}

\def\td{\on{td}}
\def\ch{\on{ch}}

\def\CCl{\C\!\on{l}}

\usepackage{scalerel,stackengine}
\newcommand\pig[1]{\scalerel*[5.5pt]{\Big#1}{%
  \ensurestackMath{\addstackgap[1.5pt]{\big#1}}}}

\usepackage{graphicx}

\def\vt{\ov{\t}}
\def\vm{\ov{\mu}}

\def\bt{\vartheta}
\def\vbt{\ov{\bt}}
\def\codim{\on{codim}}

\title{{\bf Localizing a Dirac operator via $J$-holomorphic curves}}

\author{Junho Lee}

\date{\empty}
\addtocounter{section}{0}
\begin{document}

\maketitle

\begin{abstract}

Let $(M,J)$ be a compact almost hermitian $4$-manifold with a smooth embedded $J$-holomorphic curve $C$ representing the canonical class.
Motivated by the symplectic Bogomolov--Miyaoka--Yau conjecture, we choose a twisted $\spinc$ Dirac operator
$\D$ on $M$ satisfying
$$
\ind\D=3c_2(M)-c_1^2(M).
$$
We then use the curve $C$ to construct a complex-linear perturbation $\A$ of $\D$ whose singular set is $Z_\a\sqcup C$,
where  $Z_\a=\a^{-1}(0)$ for a transverse section $\a$ of $\L^{1,0}M$ nonvanishing on $C$.
Applying Maridakis' index localization theorem, we express $\ind\D$ as the sum of localized contributions:
$3c_2(M)$ from $Z_\a$ and $-c_1^2(M)$ from $C$.
\end{abstract}

\section{Introduction}

Localization is a fruitful idea in geometry and topology.
One analytic example is a concentration principle for elliptic operators. Let
$$
\D:\Ga(E^+)\to \Ga(E^-)
$$
be a first-order elliptic operator on a compact Riemannian manifold $M$, and consider a family
$$
\D_s=\D+s\A,\qquad s\in\R,
$$
where $\A:E^+\to E^-$ is a bundle map. Maridakis~\cite{M1} showed that if $\A$ satisfies the concentration condition
\begin{equation}\label{CC}
\s_{\D^*}(\xi)\circ \A+\A^*\circ \s_\D(\xi)=0
\quad \forall \xi\in T^*M,
\end{equation}
where $\s_\D$ denotes the principal symbol of $\D$, then as $s\to \infty$ the kernels of $\D_s$ and $\D_s^*$
(as well as the elements of the low eigenspaces of $\D_s^*\D_s$ and $\D_s\D_s^*$) concentrate near the singular set
$$
Z_\A:=\{x\in M:\ker \A_x\ne 0\}
$$
and decay exponentially in the distance to $Z_\A$.
Under additional hypotheses, this concentration leads to a localization theorem for the index of $\D$.

The concentration principle has appeared in various contexts; see, for example, \cite{W, T1, R, PR, LP, M1, GW, M2, P, Nag, L}.
The prototype is Witten's deformation of the de Rham complex~\cite{W}. Another example is Taubes' ingenious localization proof of
the Riemann--Roch theorem~\cite{T1}. For Dirac operators, Prokhorenkov and Richardson~\cite{PR} studied complex-linear perturbations and proved an index localization theorem when the singular set is finite. Maridakis later formulated the general concentration principle described above for real elliptic operators~\cite{M1} and
developed a localization theorem for singular sets whose components are compact submanifolds~\cite{M2}.

This paper provides an explicit example of Maridakis' index localization theorem.
The original motivation was to investigate whether the concentration principle could be used to study the symplectic
Bogomolov--Miyaoka--Yau (BMY) conjecture. This well-known conjecture asserts that
a compact minimal symplectic $4$-manifold $M$ with $c_1^2(M)\geq 0$ satisfies
$$
3c_2(M)-c_1^2(M)\geq 0.
$$
In this context, a natural question is:

\begin{que}
Can we find an elliptic operator $\D$ and a bundle map $\A$ satisfying
\eqref{CC} and
\begin{equation}\label{indBMY}
\ind\D=3c_2(M)-c_1^2(M),
\end{equation}
such that as $s\to\infty$, the concentration of $\ker\D_s$ and $\ker\D_s^*$ along $Z_\A$
can be used to prove $\ind\D\geq 0$, and hence the BMY inequality?
\end{que}

Maridakis' index localization theorem~\cite{M2} provides a possible framework for this approach.
Under suitable hypotheses, the theorem shows that the normal derivatives of $\A$ along $Z_\A$ determine eigenbundles over $Z_\A$.
The present paper grew out of attempts to understand the role of these eigenbundles
in the setting of the BMY inequality, but it does not address the inequality.
Since our construction does not require a symplectic structure, we work in the almost hermitian setting.

Let $(M,J)$ be a compact almost hermitian $4$-manifold, and let $\S=\S^+\oplus\S^-$ be the canonical $\spinc$ spinor bundle.
Given a $\Z_2$-graded hermitian vector bundle $W=W^+\oplus W^-$, one obtains an associated Dirac operator $\D:\Ga(E^+)\to \Ga(E^-)$,
where $E=\S\otimes W$.
The Atiyah--Singer index theorem provides a way to choose bundles $W$ so that $\D$ satisfies \eqref{indBMY}.

Once $W$ is fixed, a construction of Prokhorenkov and Richardson~\cite{PR} (see Section~\ref{CPair})
associates to each complex-linear bundle map $\psi:W^+\to W^-$ a perturbation $\A$ of $\D$ satisfying the concentration condition~\eqref{CC}.
There is no obvious method for choosing $\psi$ so that the associated perturbation $\A$
has the desired singular set  and the needed geometric properties.
The main challenge is therefore to construct such a bundle map $\psi$.

Now assume that the canonical class of $M$ is represented by a smooth embedded $J$-holomorphic curve $C$.
Such curves exist for a broad class of almost hermitian $4$-manifolds,
including (i) compact complex surfaces with a smooth canonical divisor
and, by a famous theorem of Taubes, (ii) compact symplectic $4$-manifolds with $b^+>1$ (cf. \cite{T2}).
Given such a $C$, we use it to construct a complex-linear bundle map $\psi$ and analyze the associated perturbation $\A$.
The main features of the construction are as follows.
\begin{itemize}
\item[(F1)]
The singular set of the perturbation $\A$ is
$$
Z_\A=Z_\a\cup C,
$$
where $Z_\a=\a^{-1}(0)$ for a transverse section $\a$ of $\L^{1,0}M$.
Moreover, $\A$ vanishes on $Z_\A$, and hence
$$
\ker\big(\A|_{Z_{\A}}\big)=E^+|_{Z_{\A}}\qquad\text{and}\qquad\ker\big(\A^*|_{Z_{\A}}\big)=E^-|_{Z_{\A}}.
$$
In particular, both kernels form vector bundles over each component of $Z_\A$.
\end{itemize}

Note that the kernels of $\D_s$ and $\D_s^*$ concentrate near $Z_\A$ as $s\to\infty$,
and hence $\ind\D=3c_2(M)-c_1^2(M)$ is localized on $Z_\A$.
By construction, the location of $Z_\a$ can be chosen freely.
The two cases of primary interest are $Z_\a\cap C=\emptyset$ and $Z_\a\subset C$.
In this paper, we consider only the former case by choosing $\a$ that does not vanish on $C$.
The remaining features concern the case $Z_\a\cap C=\emptyset$.

Applying Maridakis' Theorem to the pair $(\D,\A)$ along $C$, we obtain:

\begin{itemize}
\item[(F2)]
The normal derivatives of $\A$ along each component $Z$ of $C$, together with Clifford multiplication on $E=\S\otimes W$, define endomorphisms
$\CC^\pm_Z$ of $E^\pm|_Z$ (see \eqref{CCoperator}). The eigenspaces of $\CC^\pm_Z$ form eigenbundles over $Z$.
We identify the eigenbundles entering the localization formula explicitly as complex line bundles. They carry associated Dirac operators
on $Z$, and the sum of the indices of these Dirac operators over all components $Z$ of $C$ is $-c_1^2(M)$.
This is the localized contribution from $C$ to $\ind\D$.
\end{itemize}

Unfortunately, we cannot directly apply Maridakis' theorem to $(\D,\A)$ at each point of $Z_\a$.
The non-degeneracy hypothesis (Condition~(C2) in Section~\ref{ILT}) at each point of $Z_\a$ requires that
the common complex rank of $W^+$ and $W^-$ be even (see Lemma~\ref{Obstruction}), whereas $W^\pm$ have complex rank $3$.
We get around this by passing to a stabilization $(\wh\D, \wh\A)$.

Applying Maridakis' theorem to
the pair $(\wh\D,\wh\A)$, we obtain:

\begin{itemize}
\item[(F3)]
After a suitable local deformation of $\wh\A$ near each point of $Z_\a$,
Maridakis' index localization theorem expresses the index $\ind\wh \D=\ind\D$ as the sum of the localized contributions from $Z_\a$ and $C$.
The contribution to $\ind\wh\D$ from $Z_\a$ is $3c_2(M)$, while the contribution from $C$ is still the sum of the indices in (F2), namely $-c_1^2(M)$.
\end{itemize}

The factor 3 in $3c_2(M)$ arises from the homotopy classes of the maps
$$
\psi|_{\pa D^4}: \pa D^4\cong S^3\cong SU(2)\to GL(3,\C),
$$
for small disks $D^4$ around each point of $Z_\a$ (see Section~\ref{RHC}).

The paper is organized as follows.
Section~2 constructs the pair $(W,\psi)$ and its associated
concentrating pair $(\D,\A)$ and establishes (F1).
Section~3 recalls Maridakis' index localization theorem.
Sections~4 and~5 establish (F2) and (F3), respectively.
The appendix discusses the relative homotopy groups used in Section~\ref{ContributionC0}.

\medskip
\non
{\bf Acknowledgements:} The author is grateful to Thomas H. Parker for many insightful discussions and helpful comments on the paper,
and to Manousos Maridakis for clarifying questions regarding his index localization theorem and useful comments.
I also thank Paul Feehan for his helpful comments.

\medskip
\non
{\bf AI disclosure:}
The author used ChatGPT 5.6 Sol to assist in constructing the local deformation needed to compute the contribution from $Z_\a$.
It suggested the explicit isomorphism $\pi_3(GL(n,\C))\cong \Z$ presented in the Appendix and
the map $R$ in \eqref{DefmapR}, both of which are built from the group $SU(2)$.
These resolved the local deformation problem near $Z_\a$.
The author verified and developed the resulting argument and takes full responsibility for the contents of the paper.

\section{Main construction}

Throughout this paper, $(M,J)$ denotes a compact almost hermitian $4$-manifold with
a smooth embedded, possibly disconnected, $J$-holomorphic curve $C$ representing the canonical class.
In this section, we recall from \cite{PR} the general form of complex-linear perturbations satisfying the concentration condition,
construct a pair $(\D,\A)$ adapted to $C$, and prove (F1) stated in the Introduction.

The almost complex structure $J$ determines a canonical $\spinc$ structure on $M$, whose spinor bundle is
$\S=\S^+\oplus\S^-$, where
\begin{equation}\label{spinorbdle}
\S^+=\L^{0,0}M\oplus\L^{0,2}M
\qquad\text{and}\qquad
\S^-=\L^{0,1}M.
\end{equation}
This $\spinc$ structure comes with Clifford multiplication
$\c(\xi):\S^\pm\to\S^\mp$ for $\xi\in T^*M$.
Let $\ga$ be the chirality operator on $\S$, i.e., $\ga|_{\S^\pm}=\pm Id$.

\subsection{Concentration pairs}
\label{CPair}

We begin by recalling how a bundle map gives rise to a concentration pair \cite{PR}.
Let $W=W^+\oplus W^-$ be a $\Z_2$-graded hermitian vector bundle over $M$. The tensor product $E_W:=\S\otimes W$ carries
the induced $\Z_2$-grading
\begin{equation}\label{Tbdle}
E_W^+=(\S^+\otimes W^+)\oplus(\S^-\otimes W^-)
\qquad\text{and}\qquad
E_W^-=(\S^+\otimes W^-)\oplus(\S^-\otimes W^+).
\end{equation}
The Clifford action on $E_W$ is $\c_{E_W}(\xi):=\c(\xi)\otimes Id_W$. Its restrictions  $E_W^\pm\to E_W^\mp$ are given by
$$
\c_{E_W}(\xi)=
\begin{bmatrix}
0 & \c(\xi)\otimes Id_{W^\mp}
\\[3pt]
\c(\xi)\otimes Id_{W^\pm} & 0
\end{bmatrix}: E_W^\pm\to E_W^\mp.
$$
A $\spinc$ connection on $\S$, induced from the Levi-Civita connection on $M$ and a hermitian connection
on $K_M^{-1}$, preserves the chirality grading in \eqref{spinorbdle}. Equip $W=W^+\oplus W^-$ with the direct sum of hermitian connections on $W^\pm$.
The resulting tensor product connection on $E_W=\S\otimes W$ preserves the four summands in \eqref{Tbdle} and is compatible with the Clifford multiplication $\c_{E_W}$.

Let $\D_{W^\pm}:\Ga(\S^+\otimes W^\pm)\to \Ga(\S^-\otimes W^\pm)$ be the twisted $\spinc$ Dirac operator, and define
\begin{equation}\label{TDirac}
\D_W:=
\begin{bmatrix}
0 & \D_{W^-}^* \\[3pt]
\D_{W^+} & 0
\end{bmatrix}:\Ga(E_W^+)\to\Ga(E_W^-).
\end{equation}
Given a complex-linear bundle map $\psi: W^+\to W^-$, define
\begin{equation}\label{TPer}
\A_\psi:=\begin{bmatrix} \ga\otimes\psi & 0 \\[3pt] 0 & \ga\otimes\psi^* \end{bmatrix}:E_W^+\to E_W^-.
\end{equation}
The principal symbols of $\D_W$ and $D_W^*$ are the two restrictions of Clifford multiplication.
Hence the concentration condition \eqref{CC} for $\A_\psi$ becomes
\begin{equation}\label{OurCC}
\c_{E_W}(\xi)\circ\A_\psi+\A_\psi^*\circ\c_{E_W}(\xi)=0\quad \forall \xi\in T^*M.
\end{equation}
Since $\ga$ anticommutes with $\c(\xi)$ for any $\xi\in T^*M$,
the pair $(\D_W,\A_\psi)$ satisfies the concentration condition.

\begin{rem}
Proposition~2.7 of \cite{PR} shows that each complex perturbation of $\D_W$ that satisfies the concentration condition~\eqref{CC}
has the form~\eqref{TPer} for some complex-linear bundle map $W^+\to W^-$.
For any real-linear bundle map $\psi:W^+\to W^-$, the associated real perturbation $\A_\psi$ also satisfies \eqref{CC}.
However, not every real perturbation satisfying \eqref{CC} is of the form $\A_\psi$; see, for example, \cite{Nag, L}.
\end{rem}

\subsection{The twisting bundle and the bundle map}
\label{TBandBM}

We now carry out the main construction of the paper. The twisting bundles are chosen so that the associated Dirac operator
satisfies \eqref{indBMY}, while the bundle map is adapted to the given $J$-holomorphic curve $C$
so that its singular set and kernel data have the required geometry
for index localization.

\subsubsection{The twisting bundle}

Henceforth, we fix the twisting bundle $W=W^+\oplus W^-$ by setting
$$
W^+:=\L^{1,0}M\oplus\ul{\C}
\qquad\text{and}\qquad
W^-:=\sl(\L^{1,0}M).
$$
Here $\sl(\L^{1,0}M)$ is the bundle of trace-free complex-linear endomorphisms
of $\L^{1,0}M$ and is the complexification of the adjoint bundle $\su(\L^{1,0}M)$.
Let $E:=E_{W}$ and  $\D:=\D_{W}$ denote the twisted bundle and Dirac operator defined by \eqref{Tbdle} and \eqref{TDirac}, respectively.

\begin{lemma}
$\ind\D=3c_2(M)-c_1^2(M)$.
\end{lemma}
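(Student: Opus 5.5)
The plan is to compute $\ind\D$ with the Atiyah--Singer index theorem for twisted $\spinc$ Dirac operators, and then reduce everything to Chern character arithmetic.

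\textbf{Step 1: reduce to two twisted operators.} By \eqref{TDirac}, $\D$ is block off-diagonal with entries $\D_{W^+}$ and $\D_{W^-}^*$. The off-diagonal blocks are zero, so the kernel and cokernel split accordingly, and
$$
\ind\D=\ind\D_{W^+}+\ind\D_{W^-}^*=\ind\D_{W^+}-\ind\D_{W^-}.
$$

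\textbf{Step 2: the index formula.} For the canonical $\spinc$ structure \eqref{spinorbdle}, the twisted Dirac operator $\D_V$ is, up to lower order terms, $\bar\partial+\bar\partial^*$ on $\L^{0,*}M\otimes V$. Hence
$$
\ind\D_V=\int_M\ch(V)\,\td(M),
$$
with $\td(M)=1+\tfrac12 c_1+\tfrac1{12}(c_1^2+c_2)$, where $c_i=c_i(M)$. Therefore
$$
\ind\D=\int_M\big(\ch(W^+)-\ch(W^-)\big)\td(M).
$$
Both $W^\pm$ have complex rank $3$, so the degree-$0$ part of $\ch(W^+)-\ch(W^-)$ vanishes. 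As a result, only the degree-$2$ and degree-$4$ parts of this difference contribute, and the $\tfrac1{12}$ term of $\td(M)$ drops out.

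\textbf{Step 3: the Chern characters.}
\begin{itemize}
\item The bundle $\L^{1,0}M$ is the complex dual of $TM$. So $c_1(\L^{1,0}M)=-c_1$, $c_2(\L^{1,0}M)=c_2$, and
$$
\ch(\L^{1,0}M)=2-c_1+\tfrac12(c_1^2-2c_2).
$$
Adding the trivial line gives $\ch(W^+)=3-c_1+\tfrac12c_1^2-c_2$.
\item For the rank-$2$ bundle $V=\L^{1,0}M$ we use $\ch(\End V)=\ch(V)\ch(V^*)$ and then remove the trivial summand spanned by the identity. Since $c_1(V)^2=c_1^2$, this gives
$$
\ch(\sl(\L^{1,0}M))=3+c_1^2-4c_2.
$$
\end{itemize}
Consequently
$$
\ch(W^+)-\ch(W^-)=-c_1-\tfrac12c_1^2+3c_2.
$$

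\textbf{Step 4: integrate.} Multiplying by $\td(M)$ and keeping the degree-$4$ part gives
$$
(-c_1)\cdot\tfrac12c_1-\tfrac12c_1^2+3c_2=3c_2-c_1^2.
$$
Integrating over $M$ yields $\ind\D=3c_2(M)-c_1^2(M)$.

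The main place to be careful is the sign and duality bookkeeping. This means checking that $\L^{1,0}M$ is the dual of $T^{1,0}M$, so its first Chern class is $-c_1$, and that the index formula is normalized so that $\ind\D_V=\int_M\ch(V)\td(M)$ holds for $\S^+=\L^{0,\text{even}}M$. Everything else is routine.
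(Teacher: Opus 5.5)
Your proposal is correct and follows the paper's route. You split $\ind\D=\ind\D_{W^+}-\ind\D_{W^-}$, evaluate with the Atiyah--Singer formula $\ind\D_V=\int_M\ch(V)\td(M)$, and your Chern character arithmetic, in particular $\ch(\sl(\L^{1,0}M))=3+c_1^2-4c_2$, is right. The difference is only cosmetic: the paper first rewrites $W^-\cong\Sym^2(\L^{1,0}M)\otimes\L^{0,2}M$ and quotes the indices $\tfrac14(c_1^2-3c_2)$ and $\tfrac54(c_1^2-3c_2)$, whereas you get $\ch(W^-)$ directly from $\ch(\End V)=\ch(V)\ch(V^*)$ and use the rank cancellation to drop the $\tfrac1{12}$ Todd term. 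One small wording slip: in Step 1 it is the diagonal blocks of the matrix for $\D$ that vanish, not the off-diagonal ones.
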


\begin{proof}
Let $V=\L^{1,0}M$. Since the pairing
$V\otimes V\to \det V:v\otimes w\mapsto v\w w$ is nondegenerate, we have
$V\cong \Hom(V,\det(V))\cong V^*\otimes \det(V)$, hence
$V^*\cong V\otimes \det(V^*)$. Since $\rank V=2$, it follows that
\begin{align*}
\End(V)&\cong V\otimes V^*\cong V\otimes V\otimes \det(V^*)\cong \big(\Sym^2(V)\oplus \L^2 V\big)\otimes\det(V^*)
\\[2pt]
&\cong \Sym^2(V)\otimes\det(V^*)\oplus \ul{\C}.
\end{align*}
Under this decomposition, the trivial summand corresponds to scalar endomorphisms. Thus,
$$
W^-\cong \Sym^2(V)\otimes\det(V^*)\cong \Sym^2(\L^{1,0}M)\otimes \L^{0,2}M.
$$
The lemma now follows from the Atiyah--Singer index theorem:
\begin{align*}
\ind\D_{W}&=\ind\D_{W^+}-\ind\D_{W^-}=\fdfrac14\big(c_1^2(M)-3c_2(M)\big)-\fdfrac54\big(c_1^2(M)-3c_2(M)\big)
\\[3pt]
&=3c_2(M)-c_1^2(M).
\end{align*}
\end{proof}

\subsubsection{The complex-linear bundle map}

We next use the given curve $C$ to construct a bundle map $\psi:W^+\to W^-$ that will determine a perturbation~\eqref{TPer}.
For any $\a\in\Om^{1,0}(M)$ and $\eta\in\Om^{2,0}(M)$, define
$$
\a_\eta:=\iota(\a^\sharp)\eta\in\Om^{1,0}(M),
$$
where $\a^\sharp\in \Ga(T^{1,0}M)$ denotes the metric dual of $\a$ with respect to the induced hermitian metric.

The following lemma is a key observation in our construction.

\begin{lemma}\label{useful}
The sections $\a$, $\eta$, and $\a_\eta$ satisfy
$$
\lb\a,\a_\eta\rb=0\qquad\text{and}\qquad
|\a_\eta|=|\a||\eta|.
$$
\end{lemma}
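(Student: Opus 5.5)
Both identities are pointwise, so the plan is to fix a point $x\in M$ and work in a unitary frame. Choose a local unitary coframe $\{e^1,e^2\}$ of $\L^{1,0}M$ at $x$, orthonormal for the induced hermitian metric, with dual frame $\{e_1,e_2\}$ of $T^{1,0}M$ so that $e^i(e_j)=\d_{ij}$. Write
$$
\a=a_1e^1+a_2e^2,\qquad \eta=h\,e^1\w e^2 .
$$
I will fix the convention that the metric dual $\a^\sharp$ is the vector in $T^{1,0}M$ corresponding to $\a$ under the hermitian identification. In the unitary frame this is $\a^\sharp=a_1e_1+a_2e_2$; if the conjugate-linear convention is used instead, the coefficients are replaced by their conjugates and the argument is unchanged. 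I also fix the normalization $|e^1\w e^2|=1$, so that $|\eta|=|h|$. This normalization is the one point that must match the paper's hermitian metric on $\L^{2,0}M$.

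Contracting gives
$$
\a_\eta=\iota(\a^\sharp)(h\,e^1\w e^2)=h\,(a_1e^2-a_2e^1).
$$
Taking the hermitian inner product, conjugate-linear in the second slot, yields
$$
\lb\a,\a_\eta\rb=\ov h\,(a_1\ov a_2\cdot(-1)+a_2\ov a_1)\cdot(\ldots)
$$
up to the placement of conjugations. The key point is that the coefficient vector of $\a_\eta$ is $h(-a_2,a_1)$, while that of $\a$ is $(a_1,a_2)$.

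The orthogonality claim then depends on which convention is used. The pairing with $(a_1,a_2)$ is $a_1\ov{(-ha_2)}+a_2\ov{(ha_1)}$, which equals $\ov h(-a_1\ov a_2+a_2\ov a_1)$. This is not zero in general, so the complex-linear convention for $\a^\sharp$ fails. The conjugate-linear convention $\a^\sharp=\ov a_1e_1+\ov a_2e_2$ works instead. With it, $\a_\eta=h(\ov a_1e^2-\ov a_2e^1)$, whose coefficient vector is $h(-\ov a_2,\ov a_1)$. Then
$$
\lb\a,\a_\eta\rb=-a_1\ov h a_2+a_2\ov h a_1=0 .
$$
This is the correct reading of "metric dual" for a $(1,0)$-form, since the hermitian metric identifies $\L^{1,0}$ conjugate-linearly with $T^{1,0}$. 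The pointwise identity is simply that $(a_1,a_2)\perp(-\ov a_2,\ov a_1)$ in $\C^2$, the familiar quaternionic $j$-rotation.

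For the norm, $|\a_\eta|^2=|h|^2(|a_1|^2+|a_2|^2)=|\eta|^2|\a|^2$. Since both sides of each identity are frame independent, the computation at $x$ proves the lemma.

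The only real obstacle is bookkeeping of conventions: the conjugate-linearity of $\sharp$ and the normalization of $|e^1\w e^2|$. I would state these explicitly at the start.
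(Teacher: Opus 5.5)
Your argument is correct and is essentially the paper's proof. Both work pointwise in a unitary coframe with the conjugate-linear metric dual $\a^\sharp=\ov{a}_1e_1+\ov{a}_2e_2$, which is exactly the paper's convention, so that $\a_\eta=h(-\ov{a}_2e^1+\ov{a}_1e^2)$; both identities are then immediate once $|e^1\w e^2|=1$. In a written version, state the conjugate-linear definition of $\sharp$ at the outset, and drop the false start with the complex-linear convention together with the garbled intermediate display.
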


\begin{proof}
In a unitary frame $\{ \mu_1,\mu_2\}$ for $T^{1,0}M$ and its dual coframe $\{\t^1,\t^2\}$ for $\L^{1,0}M$,
if $\a=\a_1\t^1+\a_2\t^2$ and $\eta=\eta_{12}\t^1\w\t^2$, then
$\a^\sharp =\ov{\a}_1\mu_1+\ov{\a}_2\mu_2$ and $\a_\eta = \eta_{12}\big(-\ov{\a}_2\t^1+\ov{\a}_1\t^2\big)$.
The lemma follows immediately from these formulas.
\end{proof}

Throughout the rest of the paper, we fix a transverse section $\eta$ of the canonical bundle $K_M=\L^{2,0} M$ whose zero set is $C$.
We also fix a transverse section $\a$ of $\L^{1,0}M$.


\begin{defn}\label{D:bdle-map}
Define $\psi:W^+\to W^-$ by
\begin{equation}\label{bdle-map}
\psi(\b,f)=\a_\eta^\sharp\otimes \b -\fdfrac12 \lb \b,\a_\eta\rb Id + f \a^\sharp\otimes \a_\eta,
\end{equation}
and let $\A:=\A_{\psi}:E^+\to E^-$ be the complex-linear perturbation defined by~\eqref{TPer}.
\end{defn}

The identity $\lb\a,\a_\eta\rb=0$ implies that $\psi(\b,f)$ is trace-free.
With respect to the induced hermitian metrics, the adjoint of $\psi$ is given by
\begin{equation}\label{adjoint}
\psi^*(B)=\pig(B\a_\eta,\lb B\a,\a_\eta\rb\pig)\quad\text{for }B\in W^-=\sl(\L^{1,0}M).
\end{equation}

\subsubsection{The singular set and kernel bundles}

Let $Z_\a:=\a^{-1}(0)$. Since $\a$ is transverse, $Z_\a$ is a finite set.
The lemma below proves (F1) in the Introduction.

\begin{lemma}\label{ss-ker}
The map $\psi$ is an isomorphism over $M\setminus (Z_\a\cup C)$ and vanishes on $Z_\a\cup C$. Hence $Z_{\A}=Z_\a\cup C$, and
$\A|_p=0$ for every $p\in Z_{\A}$. In particular,
$$
\ker\big(\A|_{Z_{\A}}\big)=E^+|_{Z_{\A}}\qquad\text{and}\qquad\ker\big(\A^*|_{Z_{\A}}\big)=E^-|_{Z_{\A}}.
$$
\end{lemma}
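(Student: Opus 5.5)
Both claims follow from a pointwise linear-algebra computation in a unitary frame. Fix $p\in M$ and a unitary coframe $\{\t^1,\t^2\}$ at $p$ as in the proof of Lemma~\ref{useful}. Write $\a=\a_1\t^1+\a_2\t^2$ and $\eta=\eta_{12}\t^1\w\t^2$, so that $\a_\eta=\eta_{12}(-\ov\a_2\t^1+\ov\a_1\t^2)$.

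\emph{Vanishing on $Z_\a\cup C$.} At $p\in Z_\a$ we have $\a=0$, hence $\a^\sharp=0$, and $\a_\eta=0$ by Lemma~\ref{useful}. At $p\in C$ we have $\eta=0$, hence $\a_\eta=0$. In either case every term of \eqref{bdle-map} contains $\a_\eta$ or $\a_\eta^\sharp$ as a factor, so $\psi_p=0$. It follows that $\psi^*_p=0$, and by \eqref{TPer} that $\A_p=0$. This gives $\ker(\A|_p)=E^+_p$ and $\ker(\A^*|_p)=E^-_p$.

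\emph{Isomorphism off $Z_\a\cup C$.} Both $W^+$ and $W^-$ have complex rank $3$, so it suffices to show that $\psi_p$ is injective when $\a(p)\ne0$ and $\eta(p)\ne0$. Lemma~\ref{useful} then gives that $\a$ and $\a_\eta$ are orthogonal and both nonzero, so they form an orthogonal basis of $\L^{1,0}_pM$.

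Suppose $\psi(\b,f)=0$ and write $\b=x\a+y\a_\eta$. The endomorphism $\psi(\b,f)$ acts on $\L^{1,0}M$ by
$$v\mapsto \lb v,\a_\eta\rb\b-\tfrac12\lb\b,\a_\eta\rb v+f\lb v,\a\rb\a_\eta,$$
up to the paper's conjugation convention for $\a^\sharp\otimes\cdot$. Evaluating on the basis $\{\a,\a_\eta\}$ yields:
\begin{itemize}
\item on $v=\a$: $-\tfrac12\lb\b,\a_\eta\rb\a+f|\a|^2\a_\eta$;
\item on $v=\a_\eta$: $|\a_\eta|^2\b-\tfrac12\lb\b,\a_\eta\rb\a_\eta$.
\end{itemize}
Note that $\lb\b,\a_\eta\rb=y|\a_\eta|^2$. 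The $\a$-component of the first image gives $y=0$, and then its $\a_\eta$-component gives $f=0$. With $y=0$ the second image reduces to $|\a_\eta|^2x\a$, which forces $x=0$. Hence $\psi_p$ is injective and therefore an isomorphism.

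\emph{Conclusion.} Since $\psi_p$ is an isomorphism, so is $\psi_p^*$, and $\ga$ is invertible. Thus the block-diagonal map $\A_p$ in \eqref{TPer} is an isomorphism, and $\ker\A_p=0$. Combining the two cases gives $Z_\A=Z_\a\cup C$, together with the stated kernel identities.

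The only point that needs care is the sign and conjugation convention for $\a^\sharp\otimes\a_\eta$ acting on $\L^{1,0}M$. Any consistent choice gives the same triangular structure above, because the key input is only the orthogonality $\lb\a,\a_\eta\rb=0$ from Lemma~\ref{useful}.
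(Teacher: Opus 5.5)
Your proof is correct and follows essentially the same route as the paper: pointwise linear algebra in the orthogonal basis $\{\a,\a_\eta\}$ supplied by Lemma~\ref{useful}, followed by the rank count $\rank W^+=\rank W^-=3$. The only difference is that the paper proves injectivity of $\psi^*$, using the adjoint formula \eqref{adjoint} and $\tr B=0$, whereas you prove injectivity of $\psi$ directly by evaluating $\psi(\b,f)$ on that basis. Your hedge about conventions is unnecessary, since $(\a^\sharp\otimes\b)(v)=\lb v,\a\rb\b$ is exactly the paper's convention.
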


\begin{proof}
Let $q\in M\setminus (Z_\a\cup C)$ and suppose $\psi^*(B)=0$ for some $B\in W^-|_q$.
Then $B\a_\eta=0$ and $\lb B\a,\a_\eta\rb=0$ by \eqref{adjoint}. Since $B$ is trace-free and, by Lemma~\ref{useful},
$\{\a, \a_\eta\}$ is an orthogonal basis of $\L^{1,0}M|_q$,
$$
0=\tr B=|\a|^{-2} \lb B\a,\a\rb + |\a_\eta|^{-2}\lb B\a_\eta,\a_\eta\rb.
$$
We thus have both $\lb B\a,\a\rb=0$ and $\lb B\a,\a_\eta\rb=0$, so $B\a=0$.
Together with $B\a_\eta=0$, we conclude $B=0$. Therefore $\psi^*$ is injective at $q$.
The equality of the ranks of $W^+$ and $W^-$ then implies that both $\psi^*$ and $\psi$ are isomorphisms at $q$.
Because $\psi|_p=0$ for every $p\in Z_\a\cup C$ by \eqref{bdle-map} and the chirality operator is an isomorphism,
it follows from \eqref{TPer} that $Z_{\A}=Z_\a\cup C$ and  $\A|_p=0$ for every $p\in Z_{\A}$.
\end{proof}

Since the pair $(\D,\A)$ satisfies the concentration condition \eqref{CC}, the kernels of $\D+s\A$ and $\D^*+s\A^*$
concentrate near the singular set $Z_\a\cup C$ as $s\to\infty$.

\begin{rem}
The grading of $W$ is chosen so that $\ind\D$ is given by \eqref{indBMY}. With this grading,
$\det(W^-)^*\otimes \det(W^+) \cong K_M$, so $\det\psi^*$ is a section of $K_M$.
The construction of $\psi$ using the $J$-holomorphic curve $C$ is motivated by Example~6 of \cite{M1}.
\end{rem}

\section{Index localization theorem}
\label{ILT}

In this section, we temporarily return to the general case considered in Section~\ref{CPair} and review the index localization theorem of \cite{M2}.
Set
$$
E:=E_W,\qquad \c_E:=\c_{E_W},\qquad\D:=\D_W,\qquad\A:=\A_\psi,
$$
where $E_W$, $\c_{E_W}$, $\D_W$, and $\A_\psi$ are defined in Section~\ref{CPair}.
Let $Z$ be a component of the singular set $Z_\A$ satisfying Condition~(C1) below, and let $N$ be its normal bundle.
Set $r:=\codim Z$. For a local orthonormal frame $\{e_1,\cdots,e_r\}$ for $N$ with its dual coframe $\{e^j\}$, set
\begin{equation}\label{nder}
\A_j:=(\nabla_{e_j}\A)|_{\ker(\A|_Z)}:\ker(\A|_Z)\to \ker(\A^*|_Z),\qquad
\A^*_j:=(\nabla_{e_j}\A^*)|_{\ker(\A^*|_Z)}=(\A_j)^*,
\end{equation}
and define
\begin{equation}\label{CCoperator}
\begin{array}{ll}
{\ds M_j^+:=-\c_E(e^j)\A_j,}\qquad &{\ds \CC^+_Z:=\sum_{j=1}^r M_j^+:\ker(\A|_Z)\to \ker(\A|_Z),}
\\
{\ds M_j^-:=-\c_E(e^j)\A^*_j,}\qquad &{\ds \CC^-_Z:=\sum_{j=1}^r M_j^-:\ker(\A^*|_Z)\to \ker(\A^*|_Z).}
\end{array}
\end{equation}
The endomorphisms $\CC^\pm_Z$ are independent of the choice of local orthonormal frame for $N$, and hence are globally defined on $Z$.

The hypotheses for index localization are the following conditions on $Z$.
\begin{itemize}
\item[(C1)]
$Z$ is a compact submanifold of $M$, $\ker(\A|_Z)$ and $\ker(\A^*|_Z)$ are vector bundles over $Z$,  and
for $v\in TM|_Z$,
\begin{equation}\label{P4}
\nabla_v\A\big(\ker(\A|_Z)\big)\subset  \ker(\A^*|_Z)\qquad\text{and}\qquad \nabla_v\A^*\big(\ker(\A^*|_Z)\big)\subset \ker(\A|_Z).
\end{equation}

\item[(C2)]
There is a positive-definite hermitian endomorphism $Q_Z:\ker(\A|_Z)\to \ker(\A|_Z)$ such that
$$
\A_j^*\A_k + \A_k^*\A_j=2\delta_{jk}Q_Z\quad(1\leq j,k\leq r).
$$
\item[(C3)]
$Q_Z$ has eigenvalues $\la_1^2,\cdots,\la_d^2$, where $\la_1,\cdots,\la_d:Z\to (0,\infty)$ are smooth and their graphs are pairwise disjoint.
\end{itemize}

We use the following form of the index localization theorem.

\begin{theorem}[\cite{M2}]\label{M2ILT}
Assume that every component $Z$ of $Z_\A$ satisfies Conditions~(C1)--(C3).
Then the spectrum of  $\CC_Z^\pm$ is  $\{(r-2k)\la_\ell:1\leq\ell\leq d, 0\leq k\leq r\}$ if $\dim Z>0$,
and is contained in this set if $\dim Z=0$.
The corresponding eigenspaces form vector bundles over $Z$.
Set
$$
S_\ell^\pm:=\ker\big(\CC_Z^\pm-r\la_\ell Id\big),
$$
and let $P_\ell^\pm:E^\pm|_Z\to S^\pm_\ell$ be the orthogonal projection. Set
$$
S_Z^\pm:=\bigoplus_{\ell=1}^d S_\ell^\pm\qquad\text{and}\qquad P^\pm:=\sum_{\ell=1}^d P_\ell^\pm.
$$
The Clifford multiplication $\c_E$ restricts to $T^*Z\otimes S_Z^\pm\to S_Z^\mp$, and the connections $P^\pm\circ\nabla|_{TZ}$
are compatible with this Clifford multiplication. Thus, the composition
$$
\D_Z: \Ga(S_Z^+)\xr{P^+\circ \nabla|_{TZ}} \Ga(T^*Z\otimes S_Z^+)\xr{\ \c_{E}\ } \Ga(S_Z^-)
$$
is a Dirac operator on $Z$. The index of $\D$ is then given by
$$
\ind\D=\sum_{Z}\ind\D_Z,
$$
where the sum ranges over the components of $Z_\A$.
\end{theorem}

\begin{rem}\label{Comparison}
Our notation and formulation differ from those of \cite{M2}. In \cite{M2}, the indices $i=0,1$ correspond to our superscripts $+$ and $-$, respectively,
while the sign $\pm$ in $S_\ell^{i\pm}$ records the sign of the eigenvalue $\pm r\la_\ell$. Since we consider only
the limit $s\to +\infty$ for $\D+s\A$, the bundles $S_\ell^{0+}$ and $S_\ell^{1+}$ in Definition~2.6 of \cite{M2} correspond to our $S_\ell^+$ and $S_\ell^-$, respectively.

By Proposition~B.1\,(2) of \cite{M2}, the endomorphisms $C^0$ and $C^1$ defined in equation~(2.11) of \cite{M2}  coincide with $\CC_Z^+$ and $\CC_Z^-$, respectively.
In our formulation, Condition~(C1) replaces the transversality assumptions~(1) and~(2). By Lemma~3.5\,(4) of \cite{M2}, Condition~(C2) is equivalent to
the non-degeneracy assumption~(3), while Condition~(C3) is precisely the stable degeneration assumption~(4).
Finally, with our sign convention for $M_j^\pm$, the eigenbundles entering the localization theorem are the $r\la_\ell$-eigenbundles of $\CC_Z^\pm$.
\end{rem}

By the preceding remark, the non-degeneracy hypothesis of \cite{M2} is equivalent to Condition~(C2).
Hence the following lemma shows that, at each isolated singular point $p$ with $\psi|_p=0$, this hypothesis imposes an even-rank condition on $W^\pm$.

\begin{lemma}\label{Obstruction}
Suppose that $\psi|_p=0$ and $\A=\A_\psi$ satisfies Condition~(C2) for $Z=p$.
Then $W^+$ and $W^-$ have the same even complex rank.
\end{lemma}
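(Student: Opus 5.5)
Since $\psi|_p=0$, we have $\A|_p=0$. So $\ker(\A|_p)=E^+|_p$ and $\ker(\A^*|_p)=E^-|_p$, and here $r=4$. In a local orthonormal frame $\{e_j\}$ at $p$ we get $\A_j=\nabla_{e_j}\A|_p=\A_{\psi_j}$, where $\psi_j:=(\nabla_{e_j}\psi)|_p:W^+|_p\to W^-|_p$. This holds because $\ga$ is parallel, the connection preserves the four summands in \eqref{Tbdle}, and $\psi|_p=0$. Concretely, $\A_j=\on{diag}(\ga\otimes\psi_j,\ \ga\otimes\psi_j^*)$. The plan is to show that Condition~(C2) forces $\psi_j^*\psi_k+\psi_k^*\psi_j=2\d_{jk}q$ for some positive-definite hermitian $q$ on $W^+|_p$ (and the analogous relation on $W^-|_p$). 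From this we will build a representation of a Clifford algebra that can only exist in even complex dimension.

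\textbf{Step 1.} First compute $\A_j^*\A_k$. Since $\ga^2=Id$ and $\ga$ is self-adjoint, this is block diagonal:
$$
\A_j^*\A_k=\on{diag}\big(Id_{\S^+}\otimes\psi_j^*\psi_k,\ Id_{\S^-}\otimes\psi_j\psi_k^*\big).
$$
Condition~(C2) then gives $\psi_j^*\psi_k+\psi_k^*\psi_j=2\d_{jk}q^+$ and $\psi_j\psi_k^*+\psi_k\psi_j^*=2\d_{jk}q^-$. Here $Q_Z=\on{diag}(Id\otimes q^+,Id\otimes q^-)$ on the $W^+$ and $W^-$ parts respectively, after noting that a positive operator of the form $Id\otimes(\cdot)$ restricts appropriately. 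Taking $j=k$ gives $\psi_j^*\psi_j=q^+>0$. Hence each $\psi_j$ is injective, and likewise each $\psi_j^*$ is injective. It follows that $\rank W^+=\rank W^-=:n$ and each $\psi_j$ is invertible.

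\textbf{Step 2.} Normalize to $\phi_j:=\psi_j (q^+)^{-1/2}$. Then $\phi_j^*\phi_k+\phi_k^*\phi_j=2\d_{jk}Id$. In particular each $\phi_j$ is unitary from $W^+|_p$ to $W^-|_p$. Set $u_j:=\phi_1^{-1}\phi_j=\phi_1^*\phi_j\in\End(W^+|_p)$ for $j=2,3,4$. These are unitary, and they satisfy $u_j+u_j^*=0$ for $j\ne1$ (take $k=1$) and $u_j^*u_k+u_k^*u_j=2\d_{jk}$. So $u_2,u_3,u_4$ are anticommuting, skew-hermitian complex-linear endomorphisms with $u_j^2=-Id$. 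This makes $W^+|_p$ a complex module over the real Clifford algebra $\Cl_{0,3}$ with generators squaring to $-1$, and in fact only complex-linearity is needed here.

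\textbf{Step 3.} This is the parity step. Consider the two anticommuting complex-linear operators $u_2,u_3$ with $u_2^2=u_3^2=-Id$. The element $iu_2$ is an involution, and $u_3$ anticommutes with it. Therefore $u_3$ swaps the $\pm1$ eigenspaces of $iu_2$ isomorphically, so $n$ is even. This uses only two of the four derivatives, so $r\ge3$ suffices. The main point to double-check is Step 1: that the complex-linearity of $\psi$, together with the tensor structure $\S^\pm\otimes W^\pm$, really forces the $W$-factor relations and not merely relations after tensoring with $\S$. I would handle this by restricting $\A_j^*\A_k$ to $s\otimes W^+|_p$ for a fixed nonzero $s\in\S^+|_p$; since $Q_Z$ is hermitian positive, the relation can be read off blockwise.
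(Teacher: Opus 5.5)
Your proof is correct, and after the common first step it takes a different, more elementary route than the paper. Both arguments start from the same reduction. Because $\gamma^2=Id$, the operator $Q_p=\A_j^*\A_j$ is literally $\mathrm{diag}(Id_{\S^+}\otimes\psi_j^*\psi_j,\,Id_{\S^-}\otimes\psi_j\psi_j^*)$, and $X\mapsto Id_{\S^\pm}\otimes X$ is injective. Hence (C2) is equivalent to $\psi_j^*\psi_k+\psi_k^*\psi_j=2\delta_{jk}q^+$ and $\psi_j\psi_k^*+\psi_k\psi_j^*=2\delta_{jk}q^-$ with $q^\pm>0$, which settles the worry you raise at the end exactly as you propose.

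From there the two proofs diverge. The paper packages $\psi_j$ and $\psi_j^*$ into self-adjoint odd operators $\widetilde\Psi_j$ on $W_p=W_p^+\oplus W_p^-$ that anticommute and square to $\widehat Q=\mathrm{diag}(q^+,q^-)$. It then decomposes $W_p$ into eigenspaces $F_\lambda$ of $\widehat Q$ and rescales by $\lambda^{-1/2}$ to get a $\mathbb{C}\mathrm{l}_4$-module on each $F_\lambda$. It uses $\mathbb{C}\mathrm{l}_4\cong M_4(\mathbb{C})$ to get $4\mid\dim F_\lambda$, and splits $F_\lambda$ evenly via the odd isomorphism. You instead stay on $W_p^+$. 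The normalization $\phi_j=\psi_j(q^+)^{-1/2}$ conjugates the relations by $(q^+)^{-1/2}$, so it removes $q^+$ with no eigenspace decomposition or commutation argument. The operators $u_j=\phi_1^*\phi_j$ then give a quaternionic structure, and parity follows because $u_3$ interchanges the $\pm1$-eigenspaces of the involution $iu_2$.

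Your route is self-contained, needing no structure theory of complex Clifford algebras. It also shows the obstruction already holds for $r\ge 3$. A small correction: $u_2,u_3$ involve three derivatives $\phi_1,\phi_2,\phi_3$, not two. The case $r=2$ genuinely fails: take $n=1$, $\psi_1=1$, $\psi_2=i$. This is consistent with the paper, where (C2) holds along $C$ with $r=2$ and rank-$3$ twisting bundles.

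The paper's route treats $W^\pm$ symmetrically and gives the finer statement that each $F_\lambda^\pm$ is even-dimensional. Its equal-rank conclusion comes from the odd isomorphism $\widetilde\Psi_j$. That relies on the same positivity on the $W^-$ block that you use to get $\rank W^+=\rank W^-$.
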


\begin{proof}
We will show that $W_p^\pm$ have the same even complex dimension. Since
$$
\A_j=\begin{bmatrix} \ga\otimes\nabla_j\psi & 0 \\[3pt] 0 & \ga\otimes \nabla_j\psi^* \end{bmatrix}:(\S\otimes W)^+|_p \to (\S\otimes W)^-|_p,\qquad
\A_j^*=(\A_j)^*,
$$
and $\ga^*\ga=\ga^2=Id$, Condition~(C2) becomes $\Psi_j^*\Psi_k+\Psi_k^*\Psi_j=2\d_{jk}\wh{Q}$ ($1\leq j,k\leq 4$), where
$$
\Psi_j=\begin{bmatrix} \nabla_j\psi & 0  \\[3pt] 0 &\nabla_j\psi^*  \end{bmatrix}:W^+_p\oplus W^-_p \to W^-_p\oplus W^+_p,
\qquad \Psi_j^*=(\Psi_j)^*,
$$
and $\wh{Q}=(\Psi_j)^*\Psi_j:W_p\to W_p$ is a positive-definite hermitian endomorphism.
Note that
$$
\wt \Psi_j:=H\Psi_j=\Psi_j^*H= \begin{bmatrix} 0 & \nabla_j\psi^*   \\[3pt] \nabla_j\psi & 0   \end{bmatrix}:W_p\to W_p
\qquad\left( H:=\begin{bmatrix} 0 & Id \\[3pt] Id & 0\end{bmatrix}\right)
$$
is a self-adjoint odd isomorphism of $W_p=W_p^+\oplus W_p^-$ and $\wt \Psi_j\wt \Psi_k+\wt \Psi_k\wt \Psi_j=2\d_{jk}\wh{Q}$ ($1\leq j,k\leq 4$).
Since each $\wt \Psi_j$ commutes with $\wh{Q}$, on each $\la$-eigenspace $F_\la=F_\la^+\oplus F_\la^-$ of $\wh{Q}$, the odd isomorphisms
$$
\la^{-\frac12}\wt \Psi_j|_{F_\la}:F_\la\to F_\la\quad(1\leq j\leq 4)
$$
define a $\CCl_4$-module structure, where $\CCl_4:=\Cl_4\otimes_\R\C\cong M_4(\C)$.
The $\CCl_4$-module structure shows that $\dim_\C F_\la$ is divisible by $4$, and thus the odd isomorphism $\wt\Psi_j|_{F_\la}$ shows that
$\dim_\C F_\la^+=\dim_\C F_\la^-$ is even. Therefore, the assertion follows from $W_p^\pm=\sum_\la F_\la^\pm$, where
the sum is over the eigenvalues $\la$ of $\wh{Q}$.
\end{proof}

We end this Section by describing the case in which $Z$ is a point.

\begin{cor}\label{ILTPoint}
With the notation above, if $Z=p$ is a point, then its contribution to $\ind\D$ is
$$
\ind_p:=\ind\D_p=\dim\left[\bigcap_j\Big(\bigoplus_\ell\ker(M_j^+ -\la_\ell Id)\Big)\right]
- \dim\left[\bigcap_j\Big(\bigoplus_\ell\ker(M_j^- -\la_\ell Id)\Big)\right].
$$
\end{cor}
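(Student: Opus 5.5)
We specialize Theorem~\ref{M2ILT} to a point. When $Z=p$ we have $\dim Z=0$, so $TZ=0$ and $T^*Z=0$. The bundles $S_Z^\pm$ are therefore just the vector spaces $S_Z^\pm|_p$. The Dirac operator $\D_p:\Ga(S_Z^+)\to\Ga(S_Z^-)$ is the zero map between finite-dimensional spaces, because Clifford multiplication by $T^*Z=0$ vanishes. Hence
$$
\ind\D_p=\dim S_Z^+-\dim S_Z^-=\sum_\ell\dim S_\ell^+-\sum_\ell\dim S_\ell^-.
$$
It remains to identify $S_\ell^\pm=\ker(\CC_p^\pm-r\la_\ell Id)$ with $\bigcap_j\ker(M_j^\pm-\la_\ell Id)$, and then to check that the direct sum over $\ell$ matches the bracketed expression.

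The key step is an eigenvalue estimate for each $M_j^+$. Using \eqref{CCoperator} and \eqref{CC}, I would show that each $M_j^+=-\c_E(e^j)\A_j$ is self-adjoint on $\ker(\A|_p)$ and satisfies $(M_j^+)^2=Q_p$. For self-adjointness, differentiate \eqref{CC} along $e_j$ with $\xi=e^j$; since $\c_E(e^j)$ is parallel, this gives $\c_E(e^j)\A_j=-\A_j^*\c_E(e^j)$ on the kernels. Then $(M_j^+)^*=-\A_j^*\c_E(e^j)^*$ equals $M_j^+$, using $\c_E(e^j)^*=-\c_E(e^j)$ with the convention $\c(e)^2=-1$. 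Squaring gives $(M_j^+)^2=\c_E(e^j)\A_j\c_E(e^j)\A_j=-\c_E(e^j)^2\A_j^*\A_j=\A_j^*\A_j=Q_p$ by (C2) with $j=k$. Consequently each $M_j^+$ preserves the $\la_\ell^2$-eigenspaces $F_\ell$ of $Q_p$, and on $F_\ell$ it has eigenvalues only $\pm\la_\ell$. The same argument applies to $M_j^-$, using $\A_j\A_j^*$ in place of $\A_j^*\A_j$.

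Now fix $\ell$ and decompose by eigenspaces of $Q_p$. Since the $M_j^+$ commute with $Q_p$, so does $\CC_p^+$, and the eigenspace $\ker(\CC_p^+-r\la_\ell)$ lies in the span of those $F_m$ with $r\la_\ell\in\{(r-2k)\la_m\}$. On $F_m$ the operator $\CC_p^+$ is a sum of $r$ self-adjoint operators, each with norm $\la_m$, so $\|\CC_p^+|_{F_m}\|\le r\la_m$. Equality $\CC v=r\la_m v$ forces each $M_j^+v=\la_m v$: by Cauchy--Schwarz, $\langle M_jv,v\rangle\le\la_m|v|^2$, and summing to $r\la_m|v|^2$ forces each term to attain the bound, which means $v$ is a $+\la_m$ eigenvector.

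The obstacle is possible coincidences $r\la_\ell=(r-2k)\la_m$ with $k\ge1$, $m\ne\ell$. The norm argument only controls the top eigenvalue within each $F_m$, so such coincidences would need to be excluded. I would handle this by reading the theorem's $S_\ell^\pm$ as the top-eigenvalue part within $F_\ell$, as in \cite{M2}, where the eigenbundles are defined blockwise on eigenspaces of $Q_Z$. With that reading, $S_\ell^\pm=\bigcap_j\ker(M_j^\pm-\la_\ell Id)$. Since $\bigcap_j\ker(M_j^\pm-\la_\ell)\subset F_\ell$ and the $F_\ell$ are mutually orthogonal, we get
$$
\bigoplus_\ell\bigcap_j\ker(M_j^\pm-\la_\ell)=\bigcap_j\bigoplus_\ell\ker(M_j^\pm-\la_\ell).
$$
The inclusion $\supseteq$ follows by projecting onto each $F_\ell$, which commutes with the $M_j^\pm$. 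Taking dimensions then yields the formula for $\ind_p$.
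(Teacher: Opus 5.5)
Your proof is correct, given the blockwise reading of $S_\ell^\pm$ that you make explicit, but its key step differs from the paper's.

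The paper differentiates \eqref{OurCC} to get $\c_E(e^j)\A_k=-\A_k^*\c_E(e^j)$ and $\A_k\c_E(e^j)=-\c_E(e^j)\A_k^*$ for all $j,k$. From these it deduces that the $M_j^\pm$ are \emph{commuting} hermitian endomorphisms with $(M_j^+)^2=Q_p$ and $(M_j^-)^2=-\c_E(e^j)Q_p\c_E(e^j)$. It then identifies the top eigenspaces by simultaneous diagonalization. On a joint eigenvector each $M_j^\pm$ acts by $\pm\la_\ell$ with a common $\ell$, so $\CC_p^\pm$ acts by $(r-2k)\la_\ell$, and the value $r\la_\ell$ occurs exactly when all signs are $+$.

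You never use commutativity. Instead you use an extremal-eigenvalue argument on each $\la_m^2$-block. There each $M_j^\pm$ is self-adjoint with spectrum in $\{\pm\la_m\}$, so $\langle\CC v,v\rangle=r\la_m|v|^2$ forces $M_jv=\la_mv$ for every $j$. The justification is the bound $\langle Mv,v\rangle\le\la_{\max}|v|^2$ for self-adjoint $M$, with equality only on the top eigenspace; it is not Cauchy--Schwarz.

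Your route needs only $M_j^*=M_j$ and $M_j^2=Q$, and it extracts exactly the top eigenspace the corollary needs. The paper's route costs the extra commutation identity, but it yields the full joint spectral decomposition, and hence also the spectrum $\{(r-2k)\la_\ell\}$ asserted in Theorem~\ref{M2ILT} when $\dim Z=0$. Your sentence placing $\ker(\CC_p^+-r\la_\ell Id)$ inside the blocks $F_m$ with $r\la_\ell\in\{(r-2k)\la_m\}$ does need that blockwise spectrum, i.e.\ commutativity. You never use it afterwards, so it is harmless.

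There are two small points.

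First, on the minus side, ``the same argument'' needs all $M_j^-$ to share one block decomposition. That is, $(M_j^-)^2=\A_j\A_j^*$ must be independent of $j$, with eigenvalues $\la_\ell^2$. This follows from $\A_j=\c_E(e^k)\A_j^*\c_E(e^k)$, which gives $\A_j\A_j^*=-\c_E(e^k)Q_p\c_E(e^k)$ for every $j$ and any $k$. You should state this explicitly.

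Second, your worry about coincidences $r\la_\ell=(r-2k)\la_m$ is legitimate. With the literal definition $S_\ell^\pm=\ker(\CC_Z^\pm-r\la_\ell Id)$, the identification $S_\ell^\pm=\bigcap_j\ker(M_j^\pm-\la_\ell Id)$ can fail in that case. The paper's simultaneous-diagonalization proof tacitly uses the same blockwise reading you adopt. This does not affect the paper's application in Proposition~\ref{CalC0}, where $d=1$.
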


\begin{proof}

By differentiating the concentration condition \eqref{OurCC} and using $\c_E(e^j)\c_E(e^j)=-Id$,
we have $\c_E(e^j)\A_k=-\A_k^*\c_E(e^j)$ and $\A_k\c_E(e^j)=-\c_E(e^j)\A^*_k$. Together with Condition~(C2),
these show that $M_j^\pm$ are commuting hermitian endomorphisms with $(M_j^+)^2=Q_p$ and $(M_j^-)^2=-\c_E(e^j)Q_p\c_E(e^j)$.
Condition~(C3) therefore implies that the spectrum of each $M_j^\pm$  is contained in $\{\pm\la_\ell\}$.
Since the induced operator $\D_p$ is trivial, $\ind\D_p = \dim S_p^+ - \dim S_p^-$.
The corollary now follows from simultaneous diagonalization of the $M_j^\pm$.
\end{proof}

This corollary agrees with the formula in Theorem~5.4 of \cite{PR}, where the opposite sign convention is used in the definition of $M_j^\pm$.

\section{Localization along $C$}

We now return to the pair $(\D,\A)$ constructed in Section~\ref{TBandBM}.
Throughout the remaining sections, we assume $Z_\a\cap C=\emptyset$, i.e., $\a$ is nonvanishing on $C$.
In this section, we apply Theorem~\ref{M2ILT} to prove (F2) stated in the Introduction.

\subsection{Eigenvalues and eigenbundles}
\label{Ebdle}

Let $Z$ be a component of $C$ and let $N$ be its normal bundle.
Locally along $Z$, choose an orthonormal frame $\{e_1,e_2=Je_1,e_3,e_4=Je_3\}$ for $TM$ and its dual coframe
$\{e^i\}$ such that $\{e_1|_Z,e_2|_Z\}$ is an orthonormal frame for $N$.
Let $\{\t^1,\t^2\}$ be a local unitary frame for $\L^{1,0}M$ given by
\begin{equation*}
\t^1=\fdfrac{1}{\sqrt{2}}(e^1-iJe^1)=\fdfrac{1}{\sqrt{2}}(e^1+ie^2)
\qquad\text{and}\qquad\t^2=\fdfrac{1}{\sqrt{2}}(e^3-iJe^3)=\fdfrac{1}{\sqrt{2}}(e^3+ie^4).
\end{equation*}
Let $\{\mu_1,\mu_2\}$ denote the unitary frame for $T^{1,0}M$ dual to $\{\t^1,\t^2\}$.

\subsubsection{Clifford multiplication}

Recall that with respect to the chosen frames, Clifford multiplication on the canonical $\spinc$ spinor bundle $\S=\S^+\oplus\S^-$ is given by
$$
\c(\ov{\t}^k)=\sqrt{2}\,\ov{\t}^k\!\w \qquad \text{and}\qquad \c(\t^k)=-\sqrt{2}\,\iota(\ov{\mu}_k)\qquad(k=1,2),
$$
and hence
\begin{equation*}
\c(e^{2k-1}) =\vt^k\!\w\, -\iota(\vm_k)\qquad\text{and}\qquad\c(e^{2k})=i\big(\vt^k\!\w\, +\iota(\vm_k)\big)\qquad(k=1,2).
\end{equation*}
For ease of reference both here and in the later computation of the contribution from $Z_\a$,
we record the resulting identities in the following table.
\begin{equation}\label{CliffM}
\begin{array}{l|l|l|l}
\c(e^1)1=\vt^1 & \c(e^1)\vt^1=-1 &\c(e^2)1=i\vt^1 & \c(e^2)\vt^1=i
\\[3pt]
\c(e^1)\vt^1\w\vt^2=-\vt^2 &\c(e^1)\vt^2=\vt^1\w\vt^2 &\c(e^2)\vt^1\w\vt^2=i\vt^2   &\c(e^2)\vt^2=i\vt^1\w\vt^2
\\[3pt] \hline & & & \\[-6pt]
\c(e^3)1=\vt^2 & \c(e^3)\vt^1=-\vt^1\w\vt^2  &\c(e^4)1=i\vt^2 & \c(e^4)\vt^1=-i\vt^1\w\vt^2
\\[3pt]
\c(e^3)\vt^1\w\vt^2=\vt^1 &\c(e^3)\vt^2=-1 & \c(e^4)\vt^1\w\vt^2=-i\vt^1  &\c(e^4)\vt^2=i
\end{array}
\end{equation}
The table is organized by the two pairs $(e^{2k-1},e^{2k})$ corresponding to $\t^k$ ($k=1,2$), displayed in the upper and
lower halves, respectively, and within each half according to the chirality of the input spinors.
This arrangement allows the relevant eigensections along $Z$ and eigenspaces at each point of $Z_\a$ to be read off directly.

\subsubsection{Orientation preserving or reversing}

Since $\eta$ is a transverse section of $K_M$, its normal
derivative $\nabla\eta:N\to K_M|_Z$ is a real bundle isomorphism.
By Lemma~5.9 of \cite{M1}, after deforming $\eta$ without changing its zero set,
we may assume that $\nabla\eta$ is orthogonal.
Lemma~\ref{ss-ker} then implies that Condition~(C1) in Section~\ref{ILT} holds for $Z$.

Since $Z$ is connected, $\nabla\eta$ is either orientation preserving or orientation reversing with respect to the complex orientations.
Set
$$
\ep := \left\{
\begin{array}{rl}
1 & \text{if $\nabla\eta$ is orientation preserving},\\[3pt]
-1 &\text{if $\nabla\eta$ is orientation reversing}.
\end{array}\right.
$$
By the adjunction formula,
\begin{equation*}
N\,\xr{\nabla\eta}\,K_M|_Z\,\cong \ov{N}\otimes T^*Z.
\end{equation*}
When $\ep=1$, $\nabla\eta$ is complex-linear, and this isomorphism implies that $N$ (with its induced holomorphic structure)
is a theta characteristic. When $\ep=-1$, $\nabla\eta$ is a conjugate-linear isomorphism, so it
induces a complex-linear isomorphism $\ov{N}\cong\ov{N}\otimes T^*Z$.
Hence $T^*Z$ is trivial, and thus the genus of $Z$ is $g(Z)=1$.
Since the components of $C$ are pairwise disjoint and their union represents the canonical class,
$c_1(K_M)[Z]=C\cdot Z=Z\cdot Z=c_1(N)[Z]$, where the last equality follows because
the self-intersection number of $Z$ in $M$ is the degree of its normal bundle.
The adjunction formula then gives
$$
2c_1(N)[Z]=c_1(N)[Z]+c_1(K_M)[Z]=2g(Z)-2=0.
$$
Therefore
\begin{equation}\label{Torus}
g(Z)=1\qquad\text{and}\qquad c_1(N)=0.
\end{equation}

\subsubsection{The non-degeneracy condition}
\label{non-degeneracy}

For $j=1,2$, write $\nabla_j:=\nabla_{e_j}$.
The following identities are useful for our subsequent discussion.
\begin{equation}\label{orientation}
|\nabla_1\eta|=1,\qquad\nabla_2\eta=\ep i\nabla_1\eta,\qquad
\a_{\nabla_2\eta}=\ep i\a_{\nabla_1\eta},\qquad(\a_{\nabla_2\eta})^\sharp=-\ep i(\a_{\nabla_1\eta})^\sharp.
\end{equation}

Using \eqref{bdle-map}, \eqref{adjoint}, and the fact that  $\eta|_Z=0$, we obtain, for each $1\leq j\leq 2$,
\begin{align*}
\psi_j(\b,f)&:=(\nabla_j\psi)|_Z(\b,f)=(\a_{\nabla_j\eta})^\sharp\otimes \b -\fdfrac12\lb\b, \a_{\nabla_j\eta}\rb Id
+ f\a^\sharp\otimes\a_{\nabla_j\eta},
\\[3pt]
\psi_j^*(B) &:= (\nabla_j\psi^*)|_Z B=\pig(B\a_{\nabla_j\eta},\lb B\a,\a_{\nabla_j\eta}\rb\pig).
\end{align*}
Along $Z$, $|\nabla_k\eta|=1$ by \eqref{orientation}. As $\lb\a,\a_{\nabla_k\eta}\rb=0$ by Lemma~\ref{useful}, it follows that:
\begin{itemize}
\item[(a)]
Each $\psi_j$, and hence each $\psi_j^*$, is an isomorphism over $Z$ by the same argument as in the proof of Lemma~\ref{ss-ker}.
\item[(b)]
$\psi_k^*\psi_j(\b,f)=\Big(\lb \a_{\nabla_k\eta},\a_{\nabla_j\eta}\rb\b-\fdfrac12\lb\b, \a_{\nabla_j\eta}\rb\a_{\nabla_k\eta}\,,
f|\a|^2\lb \a_{\nabla_j \eta},\a_{\nabla_k\eta}\rb\Big)$.
\item[(c)]
$\psi_j\psi_k^*(B)=(\a_{\nabla_j\eta})^\sharp\otimes B\a_{\nabla_k\eta} -\fdfrac12\lb B\a_{\nabla_k\eta}, \a_{\nabla_j\eta}\rb Id
+ \lb B\a,\a_{\nabla_k\eta}\rb\a^\sharp\otimes\a_{\nabla_j\eta}$.
\end{itemize}

Recalling that $\ker(\A|_Z)=E^+|_Z$ and $\ker(\A^*|_Z)=E^-|_Z$ by Lemma~\ref{ss-ker},
let $\A_j$ ($j=1,2$) denote the normal derivatives defined in Section~\ref{ILT}.
Using (b), (c), \eqref{orientation}, and
\begin{equation}\label{nder-C}
\A_j=\begin{bmatrix}
\ga\otimes\psi_j & 0 \\[3pt]
0 & \ga\otimes\psi_j^*
\end{bmatrix}:E^+|_Z\to E^-|_Z,\qquad \A_j^*=(\A_j)^*,
\end{equation}
we obtain $Q=\A_1^*\A_1=\A_2^*\A_2$ and $\A_1^*\A_2+\A_2^*\A_1=0$,
where $Q$ is positive definite over $Z$ by (a).
Thus Condition~(C2) in Section~\ref{ILT} holds for $Z$.

\subsubsection{Eigenvalues}\label{Eigenvalues}

It remains to verify that Condition~(C3) holds.
Since $C\cap Z_\a=\emptyset$, after a deformation of $\a$ supported in a neighborhood of $C$ disjoint from $Z_\a$,
we may assume that $|\a|>1$ along $C$. By Lemma~\ref{useful} and \eqref{orientation},  $|\a_{\nabla_1\eta}|=|\a|$ and
$\{\a_{\nabla_1\eta},\a\}$ is a local orthogonal frame for $\L^{1,0}M$. Noting that
$$
Id =\fdfrac{1}{|\a|^2}\a^\sharp\otimes\a + \fdfrac{1}{|\a|^2}(\a_{\nabla_1\eta})^\sharp\otimes\a_{\nabla_1\eta},
$$
we set
$$
\begin{array}{l|l|l}
\a_1:=(\a_{\nabla_1\eta},0) & \a_2:=(\a,0) & \a_3:=(0,1)
\\[3pt] \hline & & \\[-5pt]
B_1:=-\fdfrac12\big(\a^\sharp\otimes\a-(\a_{\nabla_1\eta})^\sharp\otimes \a_{\nabla_1\eta}\big) &
B_2:=(\a_{\nabla_1\eta})^\sharp\otimes\a & B_3:=\a^\sharp\otimes\a_{\nabla_1\eta}
\end{array}
$$
It follows that
\begin{itemize}
\item[(i)]
$\{\a_1,\a_2,\a_3\}$ and $\{B_1,B_2,B_3\}$ are local orthogonal frames for $W^+|_Z$ and $W^-|_Z$, respectively.
\item[(ii)]
$\psi_1(\a_\ell)=B_\ell$ ($1\leq \ell\leq 3$), $\psi_1^*(B_1)=\fdfrac12|\a|^2\a_1$, $\psi_1^*(B_2)=|\a|^2\a_2$, $\psi_1^*(B_3)=|\a|^4\a_3$.
\end{itemize}
Set
$$
\la_1:=\fdfrac{1}{\sqrt{2}}|\a|,\qquad \la_2:=|\a|,\qquad \la_3:=|\a|^2.
$$
By (i) and (ii), the eigenvalues of the endomorphism
$$
Q=\A_1^*\A_1=\begin{bmatrix}
Id\otimes\psi_1^*\psi_1&0\\[3pt]
0&Id\otimes\psi_1\psi_1^*
\end{bmatrix}
$$
of $E^+|_Z$ are $\la_1^2, \la_2^2, \la_3^2$ because the local sections
$$
\big(1\otimes \a_\ell,0\big),\quad \big((\vt^1\w\vt^2)\otimes \a_\ell,0\big),\quad \big(0,\vt^1\otimes B_\ell\big),\quad\big(0,\vt^2\otimes B_\ell\big)
\qquad(1\leq \ell\leq 3)
$$
span the $\la_\ell^2$-eigenspace in each fiber, and these together form a local orthogonal frame for $E^+|_Z$.
Since $|\a|>1$ along $Z$, we have $\la_1<\la_2<\la_3$ everywhere on $Z$.
Thus the graphs of the functions $\la_\ell:Z\to (0,\infty)$ ($1\leq \ell\leq 3$) are pairwise disjoint.
Condition~(C3) in Section~\ref{ILT} holds for $Z$.

\subsubsection{Eigenbundles}

Since $\ga|_{\S^\pm}=\pm Id$, the endomorphisms $\CC_Z^\pm:E^\pm|_Z\to E^\pm|_Z$ defined by \eqref{CCoperator} become
$$
\CC_Z^+=\begin{bmatrix}
0 & \sum\c(e^j)\otimes\psi_j^*  \\[3pt] -\sum\c(e^j)\otimes\psi_j & 0
\end{bmatrix},\quad
\CC_Z^-=\begin{bmatrix}
0 & \sum\c(e^j)\otimes\psi_j  \\[3pt] -\sum\c(e^j)\otimes\psi_j^* & 0
\end{bmatrix},
$$
where the sums are over $j=1,2$.
By Theorem~\ref{M2ILT}, each eigenspace of $\CC^\pm_Z$ forms a vector bundle over $Z$.

Let $\ep_\ell=\ep$ for $\ell=1,2$ and $\ep_\ell=-\ep$ for $\ell=3$. By \eqref{orientation} and Section~\ref{Eigenvalues}\,(ii), we have
$$
\psi_2(\a_\ell)=-\ep_\ell i \psi_1(\a_\ell)=-\ep_\ell iB_\ell\qquad\text{and}\qquad \psi_2^*(B_\ell)=\ep_\ell i\psi_1^*(B_\ell)=\ep_\ell i\la_\ell^2\a_\ell.
$$
It then follows from the Clifford multiplication \eqref{CliffM} that
\begin{align*}
\CC_Z^+\begin{bmatrix}1\otimes \la_\ell \a_\ell \\[3pt] \vt^1\otimes B_\ell \end{bmatrix}
&=-(1+\ep_\ell)\la_\ell\begin{bmatrix}1\otimes \la_\ell \a_\ell \\[3pt] \vt^1\otimes B_\ell \end{bmatrix},
&\CC_Z^+\begin{bmatrix}\vt^1\w\vt^2\otimes \la_\ell\a_\ell\\[3pt] \vt^2\otimes B_\ell\end{bmatrix}
&=(1-\ep_\ell)\la_\ell\begin{bmatrix}\vt^1\w\vt^2\otimes \la_\ell\a_\ell\\[3pt] \vt^2\otimes B_\ell\end{bmatrix},
\\[3pt]
\CC_Z^-\begin{bmatrix}1\otimes B_\ell \\[3pt] \vt^1\otimes \la_\ell\a_\ell \end{bmatrix}
&=-(1-\ep_\ell)\la_\ell\begin{bmatrix}1\otimes B_\ell \\[3pt] \vt^1\otimes \la_\ell\a_\ell \end{bmatrix},
&\CC_Z^-\begin{bmatrix}\vt^1\w\vt^2\otimes B_\ell\\[3pt] \vt^2\otimes \la_\ell\a_\ell\end{bmatrix}
&=(1+\ep_\ell)\la_\ell\begin{bmatrix}\vt^1\w\vt^2\otimes B_\ell\\[3pt] \vt^2\otimes \la_\ell\a_\ell\end{bmatrix}.
\end{align*}
From these, one sees that the $0$-eigenbundles of $\CC_Z^\pm$ both
have rank $6$ and that each $\pm2\la_\ell$-eigenbundle is a line bundle.
The $2\la_\ell$-eigenline bundles $S_\ell^\pm$ appearing in Theorem~\ref{M2ILT} have the following local frames:
\begin{equation}\label{localframe}
\begin{array}{c|c|c|c}
S^+_\ell (\ep_\ell=1) & S^+_\ell (\ep_\ell=-1) & S^-_\ell (\ep_\ell=1) & S^-_\ell (\ep_\ell=-1)
\\[3pt]
\hline & & &  \\[-5pt]
\begin{bmatrix}1\otimes \la_\ell \a_\ell \\[3pt] -\vt^1\otimes B_\ell \end{bmatrix}
&\begin{bmatrix}\vt^1\w\vt^2\otimes \la_\ell\a_\ell\\[3pt] \vt^2\otimes B_\ell\end{bmatrix}
&\begin{bmatrix}\vt^1\w\vt^2\otimes B_\ell\\[3pt] \vt^2\otimes \la_\ell\a_\ell\end{bmatrix}
&\begin{bmatrix}1\otimes B_\ell \\[3pt] -\vt^1\otimes \la_\ell\a_\ell \end{bmatrix}
\end{array}
\end{equation}

\subsection{The induced Dirac operator on $Z$}

Let $S_Z^\pm=\bigoplus_\ell S_\ell^\pm$ and let $\D_Z:\Ga(S_Z^+)\to \Ga(S_Z^-)$ be the induced Dirac operator on $Z$ given in Theorem~\ref{M2ILT}.
It remains to compute $\ind\D_Z$, the contribution from $Z$ to $\ind\D$. We do this by applying the
Atiyah--Singer index theorem, after determining the isomorphism classes of the line bundles $S^\pm_\ell$
from their transition functions.

\subsubsection{Transition functions of the eigenbundles}

The local frames of the eigenline bundles $S_\ell^\pm$ in \eqref{localframe} depend on
the choice of the adapted orthonormal frame $\{e_i\}$ along $Z$, introduced at the beginning of Section~\ref{Ebdle}.
We now compute how they transform under a change of adapted frame. The resulting scalar factors
are the transition functions of $S_\ell^\pm$, which determine  isomorphism classes of these eigenline bundles.

On an overlap, choose another
orthonormal frame $\{f_1,f_2=Jf_1,f_3,f_4=Jf_3\}$ for $TM$ and its dual coframe
$\{f^i\}$ such that $\{f_1|_Z,f_2|_Z\}$ is an orthonormal frame for $N$ and along $Z$,
$$
\begin{bmatrix} e_{2k-1} \\ e_{2k} \end{bmatrix} =
\begin{bmatrix} a_k & b_k \\ c_k  & d_k \end{bmatrix} \begin{bmatrix} f_{2k-1} \\ f_{2k} \end{bmatrix}
\quad(k=1,2).
$$
Since both frames are orthonormal and satisfy $Je_{2k-1}=e_{2k}$ and $Jf_{2k-1}=f_{2k}$, we have $a_k^2+b_k^2=1$, $c_k=-b_k$, and $d_k=a_k$.
It follows that
$$
\mu_k=\fdfrac{1}{\sqrt{2}}(e_{2k-1}-ie_{2k})=g_k\nu_k\qquad\text{where}\qquad
\nu_k:=\fdfrac{1}{\sqrt{2}}(f_{2k-1}-if_{2k}),\quad g_k:=a_k+ib_k.
$$
Here $\mu_1$ and $\nu_1$ are local unitary frames for the normal bundle $N$, and, as $TM|_Z=N\oplus TZ$,
$\mu_2$ and $\nu_2$ are local unitary frames for the tangent bundle $TZ$. Thus $g_1$ and $g_2$ are the transition functions of $U(1)$-bundles $N$ and $TZ$, respectively.
Observe that
\begin{equation}\label{tran1}
\vt^k=\fdfrac{1}{\sqrt{2}}(e^{2k-1}-ie^{2k})=g_k\vbt^k\qquad\text{where}\qquad\vbt^k:=\fdfrac{1}{\sqrt{2}}(f^{2k-1}-if^{2k}).
\end{equation}

Let $\a_\ell^\p$ and $B_\ell^\p$ ($1\leq \ell\leq  3$) denote the local sections introduced in Section~\ref{Eigenvalues}
and defined using the frame $\{f_1,f_2\}$. Noting $(g_1)^\ep=\ov{g}_1$ if $\ep=-1$, we have $\nabla_{e_k}\eta=(g_1)^\ep\nabla_{f_k}\eta$ ($k=1,2$). Hence
\begin{equation}\label{tran2}
\a_1=(g_1)^\ep\a_1^\p,\quad \a_2=\a_2^\p,\quad \a_3=\a_3^\p,\quad B_1=B_1^\p,\quad B_2=(g_1)^{-\ep} B_2^\p,\quad B_3=(g_1)^\ep B_3^\p.
\end{equation}

Using the local frames in \eqref{localframe}, together with \eqref{tran1} and \eqref{tran2},
we obtain the following transition functions of $S^\pm_\ell$ with respect to
the frames induced by $\{e_i\}$ and $\{f_i\}$.
\begin{equation}\label{tran3}
\begin{array}{c|cc|cc|cc}
& S^+_1 & S^-_1 & S^+_2 & S^-_2 & S^+_3 & S^-_3
\\[3pt]
\hline & & & & & & \\[-5pt]
\ep=1 & g_1 & g_1g_2 & 1 & g_2 & g_1g_2 & g_1
\\[3pt]
\hline & & & & & & \\[-5pt]
\ep=-1 & g_2 & 1 & g_1g_2 & g_1 & 1 & g_2
\end{array}
\end{equation}

\subsubsection{Index of $\D_Z$}

A direct calculation shows that the Clifford multiplication $\c_{E}:T^*Z\otimes S_Z^\pm\to S_Z^\mp$ further restricts to
$T^*Z\otimes S^\pm_\ell\to S^\mp_\ell$ for each $1\leq \ell\leq 3$; in fact, this also follows from Proposition~3.7\,(2) of \cite{M2}.
Hence the operator $\D_Z$ agrees with $\bigoplus_\ell \D_{Z,\ell}$ up to a zeroth-order term, where
$$
\D_{Z,\ell}:\Ga(S^+_\ell)\xr{P^+_\ell\circ\nabla|_{TZ}}\Ga(T^*Z\otimes S^+_\ell)\xr{c_{E}} \Ga(S^-_\ell).
$$

Let $\S_Z=\S_Z^+\oplus\S_Z^-$ denote the spinor bundle of the canonical
$\spinc$ structure on $Z$, where
$$
\S_Z^+=\L^{0,0}Z\cong \ul{\C}\qquad\text{and}\qquad \S_Z^-=\L^{0,1}Z.
$$
Since each $S_\ell:=S^+_\ell\oplus S^-_\ell$ is a $\Cl(T^*Z)$-module and $S^\pm_\ell$ are line bundles,
$S_\ell\cong \S_Z\otimes W_\ell$ as  Clifford modules for some complex line bundle $W_\ell$; see, for example, \cite[p.~619]{Ni}.
Here the superscripts $+$ and $-$ on $S_\ell^\pm$ refer to the grading inherited from $E^\pm|_Z$, which need not
agree with the chirality grading of $S_\ell$ as a $\Cl(T^*Z)$-module. The latter is the decomposition into $\pm 1$-eigenbundles
of the chirality operator $\ga_Z:=i\c_E(e^3)\c_E(e^4)$. Let
$$
\d_\ell:=\left\{\begin{array}{rl}
1 &\text{if }\ga_Z|_{S^+_\ell}=Id,\\[3pt]
-1 &\text{if }\ga_Z|_{S^+_\ell}=-Id.
\end{array}\right.
$$
Thus, $S_\ell^\pm\cong \S_Z^\pm\otimes W_\ell$ if $\d_\ell=1$,
whereas $S_\ell^\pm \cong \S_Z^\mp\otimes W_\ell$ if $\d_\ell=-1$.

\begin{lemma}\label{index-sign}
${\ds \ind\D_{Z,\ell}=\d_\ell\int_Z \ch(W_\ell)\td(Z)}$. 
\end{lemma}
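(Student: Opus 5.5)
Since $\D_{Z,\ell}$ is a compatible Dirac operator on the Clifford module $S_\ell=S_\ell^+\oplus S_\ell^-$ over the closed surface $Z$, the plan is to compare it with the standard twisted Dirac operator of $\S_Z\otimes W_\ell$ and then apply the Atiyah--Singer index theorem, keeping track of the grading. Fix the Clifford module isomorphism $S_\ell\cong\S_Z\otimes W_\ell$ (see \cite[p.~619]{Ni}). Under it, $\c_E$ restricted to $T^*Z$ corresponds to $\c_Z\otimes Id_{W_\ell}$, and the compatible connection $P_\ell^\pm\circ\nabla|_{TZ}$ corresponds to a Clifford-compatible connection on $\S_Z\otimes W_\ell$. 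Any two such connections differ by an endomorphism-valued $1$-form. Hence $\D_{Z,\ell}$ agrees, up to a zeroth-order term, with the operator $\D_{W_\ell}$ of $\S_Z\otimes W_\ell$ built from the canonical $\spinc$ connection and some hermitian connection on $W_\ell$, restricted to the summand corresponding to $S_\ell^+$. Zeroth-order perturbations do not change the index, so only the grading remains to be handled.

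There are two cases, depending on $\d_\ell$.
\begin{itemize}
\item If $\d_\ell=1$, then $S_\ell^\pm\cong\S_Z^\pm\otimes W_\ell$, and $\D_{Z,\ell}$ is (up to lower order) the operator $\D_{W_\ell}^+:\Ga(\S_Z^+\otimes W_\ell)\to\Ga(\S_Z^-\otimes W_\ell)$. The Atiyah--Singer theorem for the canonical $\spinc$ structure, i.e.\ Riemann--Roch on $Z$, gives $\ind\D_{Z,\ell}=\int_Z\ch(W_\ell)\td(Z)$.
\item If $\d_\ell=-1$, then $S_\ell^+\cong\S_Z^-\otimes W_\ell$ and $S_\ell^-\cong\S_Z^+\otimes W_\ell$. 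So $\D_{Z,\ell}$ is (up to lower order) the operator $\D_{W_\ell}^-:\Ga(\S_Z^-\otimes W_\ell)\to\Ga(\S_Z^+\otimes W_\ell)$. This is the formal adjoint of $\D_{W_\ell}^+$ up to lower order, because a compatible Dirac operator is formally self-adjoint modulo zeroth-order terms. Therefore $\ind\D_{Z,\ell}=-\ind\D_{W_\ell}^+=-\int_Z\ch(W_\ell)\td(Z)$.
\end{itemize}
Combining the two cases gives the formula with the sign $\d_\ell$.

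The main point needing care is the identification in the first step: that the Clifford module isomorphism $S_\ell\cong\S_Z\otimes W_\ell$ is compatible with the $\Z_2$-grading coming from $\ga_Z$, and that the $\ga_Z$-grading on the line bundles $S_\ell^\pm$ is constant on the connected surface $Z$. I would check the latter directly from the local frames \eqref{localframe} and the Clifford table \eqref{CliffM}. Since $\ga_Z=i\c_E(e^3)\c_E(e^4)$ acts on $1,\vt^1$ and on $\vt^2,\vt^1\w\vt^2$ by opposite signs, each frame in \eqref{localframe} is a $\ga_Z$-eigenvector, so $\d_\ell$ is well defined. I would also verify that the identification of $S_\ell^\pm$ with the $\pm$ or $\mp$ chirality summand matches the stated cases. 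The Clifford relation $\c(e^3)\ga_Z=-\ga_Z\c(e^3)$ guarantees that Clifford multiplication swaps the two $\ga_Z$-eigenlines, so the grading of $S_\ell$ inherited from $E^\pm|_Z$ coincides with the $\ga_Z$-grading up to the global sign $\d_\ell$.
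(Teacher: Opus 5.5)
Your proposal is correct and follows the paper's proof. The paper likewise uses the Clifford module isomorphism $S_\ell\cong\S_Z\otimes W_\ell$ to identify the symbol of $\D_{Z,\ell}$ with that of $\D_{W_\ell}$ when $\d_\ell=1$ and with that of $\D_{W_\ell}^*$ when $\d_\ell=-1$, then applies the Atiyah--Singer index theorem; your check via \eqref{CliffM} and \eqref{localframe} that $\d_\ell$ is well defined agrees with the signs the paper records in \eqref{tran4}.
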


\begin{proof}
Denote by $\D_{W_\ell}:\Om^0(W_\ell)\to\Om^{0,1}(W_\ell)$ the $W_\ell$-twisted $\spinc$ Dirac operator.
Under the $\Cl(T^*Z)$-module isomorphism $S_\ell\cong\S_Z\otimes W_\ell$, the Clifford multiplication on $S_\ell$
is identified with $\c\otimes Id_{W_\ell}$. Thus the symbol of $\D_{Z,\ell}$ is identified with that of
$\D_{W_\ell}$ if $\d_\ell=1$, and with that of $\D_{W_\ell}^*$ if $\d_\ell=-1$.
Therefore the lemma follows from the Atiyah--Singer index theorem.
\end{proof}

Since $g_1$ and $g_2$ are the transition functions of $N$ and $TZ$, respectively,
\eqref{tran3} gives the following isomorphisms of complex line bundles.
The signs $\d_\ell$ in the table below are obtained by the action of $\ga_Z$ on $S^+_\ell$, computed using \eqref{CliffM} and \eqref{localframe}.
\begin{equation}\label{tran4}
\begin{array}{r|ccc|ccc|ccc}
& \d_1& S^+_1 & S^-_1  & \d_2 & S^+_2 & S^-_2  & \d_3 & S^+_3 & S^-_3
\\[3pt]
\hline & & & & & & & & & \\[-5pt]
\ep=1
& +1 & N & N\otimes TZ & +1 & \ul{\C} & TZ & -1 & N\otimes TZ  & N
\\[3pt]
\hline & & & & & & & & & \\[-5pt]
\ep=-1
& -1 & TZ & \ul{\C}  & -1& N\otimes TZ & N & +1 & \ul{\C} & TZ
\end{array}
\end{equation}
%
%
Since $\S_Z^+\cong \ul{\C}$, we have $W_\ell\cong S_\ell^+$ if $\d_\ell=1$ and $W_\ell\cong S_\ell^-$ if $\d_\ell=-1$.
Hence \eqref{tran4} yields
$$
(W_1,W_2,W_3)\cong\left\{
\begin{array}{cl}
(N,\ul{\C},N) &\text{if }\ep=1, \\[3pt]
(\ul{\C},N,\ul{\C}) &\text{if }\ep=-1.
\end{array}\right.
$$
Combining Lemma~\ref{index-sign}, \eqref{Torus}, and this identification of $W_\ell$,  we obtain
\begin{equation}\label{ch-ind}
\ind \D_Z=\sum_{\ell=1}^3\ind \D_{Z,\ell}=\left\{
\begin{array}{ll}
{\ds \int_Z \ch(N+\ul{\C}-N)\td(Z)=1-g(Z)} &\text{when }\ep=1,
\\[8pt]
{\ds \int_Z \ch(-\ul{\C}-N+\ul{\C})\td(Z) = 0 = 1-g(Z) }&\text{when }\ep=-1.
\end{array}\right.
\end{equation}

The following proposition completes the proof of (F2) in the Introduction.

\begin{prop}\label{CalC}
The contribution from $C$ to $\ind\D$ is $-c_1^2(M)$.
\end{prop}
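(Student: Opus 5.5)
The contribution from $C$ is $\sum_Z \ind\D_Z$, summed over the components $Z$ of $C$. By \eqref{ch-ind}, each component contributes $1-g(Z)$, whichever sign $\ep$ takes. So the task is to show
$$
\sum_Z \big(1-g(Z)\big)=-c_1^2(M).
$$

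The tool is the adjunction formula. For a component $Z$ we have $2g(Z)-2 = Z\cdot Z + c_1(K_M)[Z]$. Since the components of $C$ are pairwise disjoint and $C$ represents the canonical class, $c_1(K_M)[Z]=C\cdot Z = Z\cdot Z$. Hence
$$
2g(Z)-2 = 2\,Z\cdot Z, \qquad\text{that is,}\qquad 1-g(Z) = -\,Z\cdot Z.
$$
This is the same computation already carried out before \eqref{Torus}, but there it also used $\ep=-1$. Here we use only the adjunction formula, so it holds for all components.

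Summing over $Z$ and using disjointness once more,
$$
\sum_Z Z\cdot Z = \sum_{Z,Z'} Z\cdot Z' = C\cdot C = c_1(K_M)^2 = c_1^2(M),
$$
where the last equality holds because $c_1(K_M)=-c_1(M)$ and the sign disappears on squaring. Therefore $\sum_Z \ind\D_Z = -c_1^2(M)$.

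No step here is a real obstacle. The substantive work is the eigenbundle identification behind \eqref{ch-ind}. The only care needed is that \eqref{ch-ind} expresses both cases $\ep=\pm1$ as $1-g(Z)$, with $\ep=-1$ forcing $g(Z)=1$ by \eqref{Torus}. Also, $C$ may be disconnected and different components may have different $\ep$. That causes no difficulty, because the formula $1-g(Z)$ is uniform in $\ep$.
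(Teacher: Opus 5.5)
Your proposal is correct and is essentially the paper's proof: you sum the per-component value $1-g(Z)$ from \eqref{ch-ind} and then apply the adjunction formula. The only difference is that you write out the adjunction step the paper leaves implicit, namely $1-g(Z)=-Z\cdot Z$ via $c_1(K_M)[Z]=C\cdot Z=Z\cdot Z$, and $\sum_Z Z\cdot Z=C\cdot C=c_1^2(M)$ by disjointness of the components.
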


\begin{proof}
Write $C=\sqcup_k Z_k$, and let $N_k$ denote the normal bundle of
$Z_k$. Let $\D_{Z_k}$ denote the induced Dirac operator on $Z_k$.
It then follows from Theorem~\ref{M2ILT} and \eqref{ch-ind} that the contribution from $C$ is
$$
\sum_k \ind\D_{Z_k}=\sum_k (1 - g(Z_k)) =-c_1^2(M),
$$
where the last equality follows from the adjunction formula.
\end{proof}

\section{Localization at $Z_\a$}
\label{ContributionC0}

At each point $p$ of $Z_\a$, since $\psi|_p=0$ and $W^\pm$ have complex rank $3$,
Lemma~\ref{Obstruction} shows that we cannot directly apply Theorem~\ref{M2ILT}.
In this section, we therefore pass to a stabilization $(\wh\D,\wh\A)$ and prove (F3) stated in the Introduction.
We deform $\wh\A$ locally near $Z_\a$ to satisfy Conditions (C1)--(C3) in Section~\ref{ILT} and to be convenient for computing the relevant eigenspaces.
Theorem~\ref{M2ILT} then expresses $\ind\wh\D=\ind\D$ as the sum of the contributions
from the connected components of $Z_{\wh\A}=Z_\a\sqcup C$.
By construction of the stabilization, the contribution from $C$ to $\ind\wh\D$ is given by Proposition~\ref{CalC}.
We apply Corollary~\ref{ILTPoint} to compute the contribution from $Z_\a$.

\subsection{Stabilization}
\label{Sta}

We make the following stabilization.

\begin{defn}\label{D:Sta}
Set $\wh W=\wh W^+\oplus \wh W^-$, where $\wh W^\pm=W^\pm\oplus \ul{\C}^3$, and equip the trivial summands with their trivial connections.
Extend $\psi$ to
\begin{equation}\label{st-bdlmap}
\wh\psi := \psi\oplus |\a|^2 I_3:\wh W^+\to \wh W^-,
\end{equation}
where $I_3$ denotes the identity map between the two trivial summands $\ul{\C}^3$. Set
$$
\wh E:=E_{\wh W},\qquad\wh\D:=\D_{\wh W},\qquad\text{and}\qquad \wh\A:=\A_{\wh \psi},
$$
where $E_{\wh W}$, $\D_{\wh W}$, and $\A_{\wh \psi}$ are defined by \eqref{Tbdle}, \eqref{TDirac}, and \eqref{TPer}.
\end{defn}

The stabilized pair $(\wh\D,\wh\A)$ has the following properties:
\begin{itemize}
\item[(i)]
By construction, the stabilized pair $(\wh\D,\wh\A)$ satisfies the concentration condition~\eqref{CC}.
\item[(ii)]
Since $\ch(\wh W^+ - \wh W^-)=\ch(W^+-W^-)$, by the Atiyah--Singer index theorem,
$$
\ind\wh\D =\ind\D=3c_2(M)-c_1^2(M).
$$
\item[(iii)]
The stabilized bundles and perturbation split as
$$
\wh E^\pm=E^\pm\oplus E_{st}^\pm\qquad \text{and}\qquad\wh\A=\A\oplus\A_{st},
$$
where $E_{st}:=E_{\ul{\C}^3\oplus\ul{\C}^3}=\S\otimes(\ul{\C}^3\oplus\ul{\C}^3)$ with its natural $\Z_2$-grading and
$\A_{st}:=\A_{|\a|^2I_3}:E_{st}^+\to E_{st}^-$
is the perturbation defined by \eqref{TPer}.
Both $E$ and $E_{st}$ are $\Cl(T^*M)$-submodules of $\wh E$.
\item[(iv)]
The singular set of $\wh\A$ remains $Z_{\wh\A}=Z_\a\sqcup C$.
Let $Z$ be a component of $Z_{\wh \A}$.
\begin{itemize}
\item
If $Z$ is a point of $Z_\a$, then $\ker(\wh\A|_Z)=\wh E^+|_Z$ and $\ker(\wh\A^*|_Z)=\wh E^-|_Z$.
\item
If $Z$ is a component of $C$, then $\ker(\wh\A|_Z)=E^+|_Z$ and $\ker(\wh\A^*|_Z)=E^-|_Z$.
\end{itemize}
In either case, since the connection on $\wh E=E\oplus E_{st}$ is the direct sum connection, for $v\in TM|_Z$,
$\nabla_v\wh\A$ and $\nabla_v\wh\A^*$ satisfy \eqref{P4}.
\item[(v)]
When $Z$ is a component of $C$, (iv) shows that the normal derivatives $\wh\A_j$ defined by \eqref{nder} in Section~\ref{ILT}
agree with $\A_j$ given in \eqref{nder-C} in Section~\ref{non-degeneracy}. Therefore, the contribution from
$C$ to $\ind\wh\D$ is given by Proposition~\ref{CalC}.

\end{itemize}

In the remaining subsections, we make suitable local deformations near $Z_\a$, verify Conditions (C1)--(C3) in Section~\ref{ILT},
and compute the localized contributions.

\subsection{Local deformation}

We fix the local frames and coordinates used in the deformation and the eigenspace computation.
For each $p\in Z_\a$, choose an open neighborhood $U_p$ such that the closures of the $U_p$ are pairwise disjoint and disjoint from $C$.
For each $p\in Z_\a$, choose an orthonormal basis $\{e_1,e_2=Je_1,e_3,e_4=Je_3\}$ for $T_pM$ and its dual basis $\{e^i\}$.
Choose coordinates $x=(x_1,\cdots,x_4)$ on $U_p$, centered at $p$,
such that $\pa x_i|_p=e_i$, and extend $\{e_i\}$ by setting $e_i=\pa x_i$.
Lastly, choose a local unitary frame $\{\t^1,\t^2\}$ for $\L^{1,0}M$ such that, at $p$,
\begin{equation*}
\t^1=\fdfrac{1}{\sqrt{2}}(e^1+ie^2)
\qquad\text{and}\qquad\t^2=\fdfrac{1}{\sqrt{2}}(e^3+ie^4).
\end{equation*}

Let $p\in Z_\a$. Identify $U_p$ with the unit ball in $\C^2$, and set $z_1=x_1+ix_2$ and $z_2=x_3+ix_4$.
Let $\ep_p\in\{1,-1\}$ denote the local index of the transverse zero of $\a$ at $p$.
By Appendix~\ref{homotopy}\,(a), after a deformation supported in $U_p$, we may assume that
$$
\a=\left\{
\begin{array}{ll}
z_1\t^1+z_2\t^2
&\text{if}\ \ep_p=1,\\[3pt]
\ov{z}_1\t^1+z_2\t^2
&\text{if}\ \ep_p=-1.
\end{array}\right.
$$
Since $\eta$ is nowhere zero on $U_p$, it has the form $\eta=f\, \theta^1\wedge \theta^2$, where $f:U_p\to \C^*$.
Hence, after deformation supported in $U_p$ we can also assume that
$\eta= \theta^1\wedge \theta^2$.

\subsubsection{Local matrix representation}

Let $\{\mu_1,\mu_2\}$ denote the unitary frame for $T^{1,0}M$ dual to $\{\t^1,\t^2\}$.
Choose a unitary frame for $W^-=\sl(\L^{1,0}M)$ over $U_p$, where
$$
B_1:=\mu_1\otimes\t^2,
\quad
B_2:=\mu_2\otimes\t^1,
\quad
B_3:=\fdfrac{1}{\sqrt{2}}\big(\mu_1\otimes\t^1-\mu_2\otimes\t^2\big).
$$
We first consider the case $\ep_p=1$. Locally we have
$$
\a^\sharp=\ov{z}_1\mu_1+\ov{z}_2\mu_2,\quad
\a_\eta=-\ov{z}_2\t^1+\ov{z}_1\t^2,\quad
\a_\eta^\sharp=-z_2\mu_1+z_1\mu_2.
$$
With respect to the above unitary frames, the components of the bundle map $\psi$ in \eqref{bdle-map} are
$$
\psi(\t^1,0)=z_1B_2 -\fdfrac{\sqrt{2}}{2}z_2B_3,\quad
\psi(\t^2,0)=-z_2B_1-\fdfrac{\sqrt{2}}{2}z_1B_3,\quad
\psi(0,1)=\ov{z}_1^2 B_1-\ov{z}_2^2B_2-\sqrt{2}\ov{z}_1\ov{z}_2 B_3,
$$
and hence the matrix representation of $\psi$ is
\begin{equation}\label{MR}
\Psi_+(z_1,z_2)=
\begin{bmatrix*}[r]
0 & -z_2 & \ov{z}_1^2 \\[3pt]
z_1 & 0 & -\ov{z}_2^2\\[3pt]
-\fdfrac{\sqrt{2}}{2}z_2 & -\fdfrac{\sqrt{2}}{2}z_1
& -\sqrt{2}\ov{z}_1\ov{z}_2
\end{bmatrix*}.
\end{equation}
If $\ep_p=-1$, the matrix representation $\Psi_-$ of $\psi$ has the same form with $z_1$ replaced by $\ov{z}_1$. Thus
$$
\Psi_-=\Psi_+\circ\rho,
$$
where $\rho:\C^2\to\C^2$ denotes the involution defined by $\rho(z_1,z_2)=(\ov{z}_1,z_2)$.

\subsubsection{The relative homotopy class}
\label{RHC}

Note that $\Psi_+$ is invertible on $S^3$ since
$$
\det\Psi_+(z_1,z_2)=-\frac{1}{\sqrt2}\bigl(|z_1|^2+|z_2|^2\bigr)^2.
$$
Since $\Psi_-=\Psi_+\circ \rho$ and $\rho$ preserves $S^3$, the restrictions $\Psi_\pm|_{S^3}$ take values in $GL(3,\C)$,
and therefore so does $\psi|_{S^3}$.
Similarly, the restriction $\wh\psi|_{S^3}$ of the bundle map defined in \eqref{st-bdlmap}
takes values in $GL(6,\C)$ (note that $|\a|=1$). The homotopy classes of these maps are determined by the integer-valued invariant $\tau_n$
defined in \eqref{winding}. Specially, we have:

\begin{lemma}\label{mrep}
$\tau_6(\wh\psi|_{S^3})=\tau_3\big(\psi|_{S^3}\big)=-3\ep_p$.
\end{lemma}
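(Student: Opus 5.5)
The first equality in Lemma~\ref{mrep} should follow from the stabilization properties of $\tau_n$, and the second from an explicit reduction of $\Psi_\pm|_{S^3}$ to a map into $SU(2)$, whose class is a degree.

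\emph{Step 1 (stabilization).} On $S^3$ we have $|\a|=1$, so $\wh\psi|_{S^3}=\psi|_{S^3}\oplus I_3$ is block diagonal with a constant identity block. I will use the property of the invariant $\tau_n$ in \eqref{winding} that it is compatible with the stabilization $GL(n,\C)\hookrightarrow GL(n+m,\C)$, $g\mapsto g\oplus I_m$. This is the standard fact that the inclusion induces the isomorphism $\pi_3(GL(n,\C))\cong\pi_3(GL(n+m,\C))\cong\Z$ for $n\ge 2$, and the explicit isomorphism in the Appendix is built to respect it. This gives $\tau_6(\wh\psi|_{S^3})=\tau_3(\psi|_{S^3})$. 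Since $\tau_3$ depends only on the homotopy class, I may also replace $\psi|_{S^3}$ by its matrix $\Psi_{\ep_p}|_{S^3}$ in the fixed unitary frames over $U_p$; on the contractible set $U_p$ the choice of trivialization does not affect the class.

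\emph{Step 2 (reduction to $\ep_p=1$).} We have $\Psi_-=\Psi_+\circ\rho$, and $\rho(z_1,z_2)=(\ov z_1,z_2)$ restricts to an orientation-reversing diffeomorphism of $S^3$. Precomposition with a degree $-1$ map negates the class in $\pi_3$. Hence $\tau_3(\Psi_-)=-\tau_3(\Psi_+)$, and it suffices to show $\tau_3(\Psi_+|_{S^3})=-3$.

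\emph{Step 3 (computing $\tau_3(\Psi_+)$).} This is the main obstacle. I plan to reduce $\Psi_+$ through invertible matrices on $S^3$ to a block form $1\oplus h$ with $h:S^3\to GL(2,\C)$. The justification is the fibration $U(2)\to U(3)\to S^5$ together with $\pi_3(S^5)=\pi_4(S^5)=0$. In practice I will use explicit row and column operations by matrices depending continuously on $z$; each such operation is homotopic to the identity through invertible matrices, or contributes a class I can compute separately using additivity of $\tau$ under pointwise products.

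First I would use the linear entries $z_1,z_2$ in the first two columns to clear the third column. The quadratic antiholomorphic entries $\ov z_1^2,\ \ov z_2^2,\ \ov z_1\ov z_2$ are exactly the products obtained from the unit-modulus constraint. Then I would normalize the determinant $-\tfrac1{\sqrt2}(|z_1|^2+|z_2|^2)^2$ to a constant and deform the remaining $2\times2$ block into $SU(2)$. The class of the resulting map in $\pi_3(SU(2))$ is the degree of a map $S^3\to S^3$. I expect it to be a product of powers of the basic map
$$
g(z)=\begin{bmatrix} z_1 & -\ov z_2\\ z_2 & \ov z_1\end{bmatrix},
$$
namely the $SU(2)$-building block suggested by the Appendix.

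Bookkeeping the degrees, with the generator fixed by the Appendix's normalization of $\tau_n$, should give total $-3$. Heuristically, the linear block contributes $\pm1$ and the quadratic conjugate column contributes the remaining $\mp$ multiple, up to the orientation conventions. As a cross-check, I would evaluate the integral formula $\tau=\tfrac{1}{24\pi^2}\int_{S^3}\tr(\Psi^{-1}d\Psi)^3$ (if that is the normalization in \eqref{winding}). Using the $U(1)$-symmetry $(z_1,z_2)\mapsto(e^{it}z_1,e^{-it}z_2)$, under which $\Psi_+$ is equivariant up to diagonal unitary matrices, this should confirm the sign $-3$.
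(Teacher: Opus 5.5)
Your Steps 1 and 2 match the paper: additivity of $\tau_n$ under block sums from Appendix~(e), and $\Psi_-=\Psi_+\circ\rho$ with $\rho|_{S^3}$ of degree $-1$. The gap is Step~3, which is the entire content of the lemma. You never compute $\tau_3(\Psi_+|_{S^3})$.

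\begin{itemize}
\item The value $-3$ is asserted rather than derived, the sign is left ``up to orientation conventions'', and no reduction to $1\oplus h$ is actually produced.
\item The concrete first move you propose fails. At points of $S^3$ with $z_2=0$, the first row of $\Psi_+$ is $(0,0,\ov z_1^2)$ with $|z_1|=1$. So no continuous column operations using the first two columns can clear the third column. The same happens at $z_1=0$ with the second row.
\item More fundamentally, $z$-dependent row and column operations carry their own classes in $\pi_3$. Computing those is the same problem you set out to solve.
\item Your cross-check uses $+\frac{1}{24\pi^2}$, whereas \eqref{winding} has $-\frac{1}{24\pi^2}$. The sign of $\tau_3(\Psi_+)$ relative to $\tau_2(\Phi)=1$ is exactly what decides whether $S^+_p$ or $S^-_p$ is $3$-dimensional in Proposition~\ref{CalC0}. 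It cannot be left to conventions.
\end{itemize}

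The paper closes this by direct computation. In positively oriented Hopf coordinates, $\tr((\Psi_+^{-1}d\Psi_+)^3)=36\sin u\cos u\,du\w dv_1\w dv_2$. This integrates to $72\pi^2$, and the normalization $-\frac{1}{24\pi^2}$ then gives $-3$.

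If you want a structural argument in the spirit of your plan, use the adapted frame from Section~4. On $S^3$, $\psi$ sends $\{(\a_\eta,0),(\a,0),(0,1)\}$ to $\{\tfrac12(\a_\eta^\sharp\otimes\a_\eta-\a^\sharp\otimes\a),\ \a_\eta^\sharp\otimes\a,\ \a^\sharp\otimes\a_\eta\}$. Hence $\Psi_+=F_-F_+^{-1}$, where:
\begin{itemize}
\item $F_+=U\oplus 1$ with $U=\begin{bmatrix}-\ov z_2 & z_1\\ \ov z_1 & z_2\end{bmatrix}$;
\item $F_-$ is the matrix of $\mathrm{Ad}_U$ on $\sl_2(\C)$ in the basis $\{B_1,B_2,B_3\}$, times a constant matrix.
\end{itemize}
Swapping the columns of $U$ gives $\Phi\circ\sigma$ with $\sigma(z_1,z_2)=(z_1,-\ov z_2)$, which has degree $-1$. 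So $\tau_2(U)=-1$ by Appendix~(f). For traceless $X,Y$ one has $\tr_{\sl_2}(\mathrm{ad}X\,\mathrm{ad}Y)=4\tr(XY)$, and $\det U$ is constant, so $\tau_3(F_-)=4\tau_2(U)$. Additivity of $\tau$ under pointwise products then gives $\tau_3(\Psi_+)=4\tau_2(U)-\tau_2(U)=-3$, which also explains the factor $3$.
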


\begin{proof}
Let $(u,v_1,v_2)$ denote the Hopf coordinates on $S^3$ given by
$$
z_1=e^{i v_1}\sin u\quad\text{and}\quad z_2=e^{i v_2}\cos u,
$$
where $0\leq u\leq \frac{\pi}{2}$ and $0\leq v_1,v_2\leq 2\pi$.
These coordinates are positively oriented.
A direct matrix calculation shows that on $S^3$,
\begin{align*}
\tr\bigl((\Psi_+^{-1}d\Psi_+)^3\bigr)&=
3\tr\left( (\Psi_+^{-1}\pa_u\Psi_+)\left[ \Psi_+^{-1}\pa_{v_1}\Psi_+,
\Psi_+^{-1}\pa_{v_2}\Psi_+ \right]\right) du\wedge dv_1\wedge dv_2
\\[3pt]
&= 36\sin u\cos u\,du\wedge dv_1\wedge dv_2.
\end{align*}
Using \eqref{winding}, it follows that $\tau_3\big(\Psi_+|_{S^3}\big)=-3$.
Since $\Psi_-=\Psi_+\circ \rho$ and $\rho|_{S^3}$ has degree $-1$,
$\tau_3\big(\Psi_-|_{S^3}\big)=3$.
Finally, $\wh\psi=\psi\oplus |\a|^2I_3$ with $|\a|=1$ on $S^3$, so the first equality of the lemma follows from
Appendix\,(e).
\end{proof}

We can now put $\wh\psi$ into a canonical local form.
Let $\Phi: \C^2\to M_2(\C)$ be the map defined by formula \eqref{identification}.
Define a map $R:U_p\to M_6(\C)$ by
\begin{equation}\label{DefmapR}
R:=
\left\{
\begin{array}{cl}
(\Phi\circ\rho)^{\oplus 3} &\text{if }\ep_p=1,\\[3pt]
\Phi^{\oplus 3} &\text{if }\ep_p=-1.
\end{array}
\right.
\end{equation}
Then $R$ is invertible on $U_p\setminus\{p\}$, and $\tau_6\big( R|_{S^3}\big)=-3\ep_p$ by  Appendix~\ref{homotopy}\,(e),(f).
Lemma~\ref{mrep} then shows that
$\tau_6\big(\wh\psi|_{S^3}\big)=\tau_6\big(R|_{S^3}\big)$.
Since $\tau_6:\pi_3(GL(6,\C))\to\Z$ is an isomorphism, we have
\begin{equation*}
\big[\wh\psi|_{S^3}\bigr]=\bigl[R|_{S^3}\big]
\quad\text{in}\quad
\pi_3(GL(6,\C)).
\end{equation*}
Therefore, by Appendix~\ref{homotopy}\,(b), we may assume that
\begin{equation}\label{psi=R}
\wh\psi=R\quad\text{near $p$}
\end{equation}
after a deformation supported in $U_p$.

\subsubsection{The deformed perturbation}

After the deformation \eqref{psi=R}, we can explicitly calculate the matrix of the perturbation $\wh\A$  at a point $p\in Z_\a$,
and verify Conditions~(C1)--(C3) of Section~\ref{ILT}.
By construction, the deformed perturbation $\wh\A$ still satisfies property~(iv) in Section~\ref{Sta}, and thus
Condition~(C1) in Section~\ref{ILT} holds.

Next note that
$$
\Phi(z)=x_1A_1+x_2A_2+x_3A_3+x_4A_4,\qquad\Phi\circ \rho(z)=x_1A_1-x_2A_2+x_3A_3+x_4A_4,
$$
where
$$
A_1=\begin{bmatrix} 1 & 0 \\ 0 & 1 \end{bmatrix},\qquad
A_2=\begin{bmatrix*}[r] i & 0 \\ 0 & -i \end{bmatrix*},\qquad
A_3=\begin{bmatrix*}[r] 0 & 1 \\ -1 & 0 \end{bmatrix*},\qquad
A_4=\begin{bmatrix} 0 & i \\ i & 0 \end{bmatrix}.
$$
The matrices $A_2,A_3,A_4$ satisfy the quaternionic relations. The map \eqref{DefmapR} is $R=\sum_{j=1}^4 x_jR_j$ for the $6\times 6$ matrices
$$
R_1:=A_1^{\oplus 3},\qquad  R_2:=(-\ep_pA_2)^{\oplus 3},\qquad  R_3:=A_3^{\oplus 3},\qquad  R_4:=A_4^{\oplus 3},
$$
and the matrices $\wh\A_j$ and $\wh\A^*_j$ (defined in Section~\ref{ILT}) for $Z=p$ are
$$
\wh\A_j=\begin{bmatrix} \ga\otimes R_j & 0 \\[3pt] 0 & \ga\otimes R_j^* \end{bmatrix}:\wh E^+_p\to \wh E^-_p,\qquad \wh\A^*_j=(\wh\A_j)^*.
$$
It follows that $\wh\A_j^*\wh\A_k+\wh\A_k^*\wh\A_j=2\d_{jk}Q_p$ and $Q_p=Id$.
Therefore, Condition~(C2) holds, and Condition~(C3) holds with $d=1$ and $\la_1=1$.

\subsection{Simultaneous eigenspaces}

Let $M_j^\pm:\wh E_p^\pm\to \wh E_p^\pm$ denote the commuting hermitian endomorphisms defined by \eqref{CCoperator} for $Z=p$,
and let $S_p^\pm$ be as in Theorem~\ref{M2ILT}.
Since $\ga|_{\S^\pm}=\pm Id$, $R_1=I_6$, and $R_j^*=-R_j$ ($j\geq 2$), we have
$$
M_1^+=M_1^-=\begin{bmatrix} 0 & \c(e^1)\otimes I_6 \\[3pt] -\c(e^1)\otimes I_6 & 0 \end{bmatrix},\quad
M_j^+=-M_j^-=-\begin{bmatrix} 0 & \c(e^j)\otimes R_j \\[3pt] \c(e^j)\otimes R_j & 0 \end{bmatrix}\quad(j\geq 2).
$$

The following proposition completes the proof of (F3) in the Introduction.

\begin{prop}\label{CalC0}
The contribution from $Z_\a$ to $\ind\wh\D$ is $3c_2(M)$.
\end{prop}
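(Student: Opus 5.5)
The plan is to compute the local index $\ind_p$ at each $p\in Z_\a$ using Corollary~\ref{ILTPoint}, and then sum over $p$. Since the deformed perturbation satisfies (C2)--(C3) with $d=1$ and $\la_1=1$, the corollary gives
$$
\ind_p=\dim\Big(\bigcap_{j=1}^4\ker(M_j^+-Id)\Big)-\dim\Big(\bigcap_{j=1}^4\ker(M_j^--Id)\Big).
$$
I will show that $\ind_p=3\ep_p$. Then, since $\a$ is a transverse section of $\L^{1,0}M$, the Poincar\'e--Hopf type count gives
$$
\sum_{p\in Z_\a}\ep_p=c_2(\L^{1,0}M)=c_2(M),
$$
so the total contribution is $3c_2(M)$, which is the claim. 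The sign convention for $\ep_p$ and the orientation of $\L^{1,0}M$ versus $T^{1,0}M$ must be checked against Appendix~\ref{homotopy}\,(a); if a global sign appears there, it will be absorbed into this identification.

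\textbf{Step 1: reduce to one block.} Each $R_j=(\pm A_j)^{\oplus 3}$ is block diagonal with respect to $\ul{\C}^6=(\ul{\C}^2)^{\oplus 3}$. Hence $\wh E_p^\pm$ splits into three isomorphic $M_j^\pm$-invariant summands. On each summand the structure is $\S_p\otimes(\C^2\oplus\C^2)$ with the $2\times 2$ matrices $A_1,-\ep_pA_2,A_3,A_4$. This gives
$$
\ind_p=3\,\ind_p^{(2)},
$$
where $\ind_p^{(2)}$ is the same count on one $8$-dimensional summand $E^{(2)\pm}=(\S^+\otimes\C^2)\oplus(\S^-\otimes\C^2)$. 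It remains to show $\ind_p^{(2)}=\ep_p$.

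\textbf{Step 2: compute the simultaneous eigenspace by a trace formula.} The $M_j^\pm$ are commuting hermitian involutions, since $(M_j^\pm)^2=Q_p=Id$ as in the proof of Corollary~\ref{ILTPoint}. Therefore the projection onto their common $+1$-eigenspace is $\frac1{16}\prod_j(Id+M_j^\pm)$, and
$$
\dim\bigcap_j\ker(M_j^\pm-Id)=\fdfrac1{16}\sum_{I\subset\{1,2,3,4\}}\tr\Big(\prod_{j\in I}M_j^\pm\Big).
$$
By the displayed formulas, $M_1^+=M_1^-$ and $M_j^-=-M_j^+$ for $j\ge2$. So in the difference only the subsets $I$ containing an odd number of indices from $\{2,3,4\}$ survive, each with a factor $2$. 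I will evaluate these traces using the Clifford table \eqref{CliffM} together with the quaternion relations for $A_2,A_3,A_4$.

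Products of one or two of the $M_j$ contain an unpaired Clifford factor $\c(e^j)$ or $\c(e^j)\c(e^k)$ with $j\ne k$, and these should be traceless on $\S$. I therefore expect only the terms $I=\{2,3,4\}$ and $I=\{1,2,3,4\}$ to be nonzero. The product $\c(e^2)\c(e^3)\c(e^4)\otimes A_2A_3A_4$ is the one where the quaternion identity $A_2A_3A_4=\pm Id$ combines with the volume element $\c(e^1)\c(e^2)\c(e^3)\c(e^4)=\pm\ga$. The sign $-\ep_p$ on $R_2$ enters linearly in these odd terms. The outcome should be that the difference of traces is $\pm16\ep_p$, giving $\ind_p^{(2)}=\ep_p$. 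I will fix the overall sign by checking consistency with Lemma~\ref{mrep}.

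\textbf{Fallback and main obstacle.} As a check I would also diagonalize directly: list the $+1$-eigenvectors of $M_1^+$, which by the table pair $1\otimes v$ with $\vt^1\otimes v$ and similar, and intersect them with the eigenspaces of the other $M_j^+$. I expect the main obstacle to be the sign bookkeeping in Step~2: the block off-diagonal form of $M_j^\pm$ mixes the $\S^+$ and $\S^-$ components, the orientation sign $\ep_p$ must be tracked, and the overall sign relating $\sum_p\ep_p$ to $+c_2(M)$ must come out consistently.
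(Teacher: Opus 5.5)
Your route works and differs from the paper's, but your prediction of which terms survive is wrong. Each $M_j^\pm$ is block off-diagonal with respect to $\wh E_p^\pm=(\S^+\otimes\C^6)\oplus(\S^-\otimes\C^6)$. Hence any product of an odd number of them is traceless, and this includes $I=\{2,3,4\}$, which you expected to contribute. The terms $I=\{1,j\}$ with $j\ge 2$ vanish because $\tr R_j=0$ (and also $\tr_{\S^\pm}\c(e^1)\c(e^j)=0$). So the whole difference comes from $I=\{1,2,3,4\}$. From \eqref{CliffM}, $\c(e^1)\c(e^2)\c(e^3)\c(e^4)=-\ga$, and
\[
R_2R_3R_4=(-\ep_pA_2A_3A_4)^{\oplus 3}=(-\ep_pA_2^2)^{\oplus 3}=\ep_pI_6 .
\]
Tracking the block signs gives $M_1^+M_2^+M_3^+M_4^+=\ep_p\,Id$ on the $24$-dimensional space $\wh E_p^+$. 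Therefore
\[
\dim S_p^+-\dim S_p^-=\tfrac{2}{16}\cdot 24\,\ep_p=3\ep_p .
\]
The sign is produced by this computation, so there is nothing to calibrate against Lemma~\ref{mrep}; that lemma only justifies the deformation $\wh\psi=R$. Likewise $\sum_p\ep_p=e(\L^{1,0}M)=c_2(M)$ holds with no sign to absorb, since $\ep_p$ is by definition the local index of $\a$.

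The paper instead solves the eigen-equations directly. The equation $M_1^+\zeta=\zeta$ forces $\zeta=(-1\otimes u+\vt^1\w\vt^2\otimes v,\ \vt^1\otimes u+\vt^2\otimes v)$. The remaining equations, together with $R_2R_3=-\ep_pR_4$, give $S_p^+=\{R_2u=-iu,\ v=R_3u\}$ when $\ep_p=1$ and $S_p^+=0$ when $\ep_p=-1$. That approach exhibits the simultaneous eigenspaces explicitly, which fits the paper's emphasis on identifying eigenbundles. Your character formula for the projector $\prod_j\tfrac12(Id+M_j^\pm)$ gives only dimensions. In exchange, it reduces the local index here to the single trace $\ind_p=\tfrac18\tr(M_1^+M_2^+M_3^+M_4^+)$. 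This shows why only the top-degree term contributes and why $\ep_p$ enters linearly through $R_2$. It also explains the factor $3$ as $\dim\wh E_p^+/8$, and it makes your Step~1 block reduction unnecessary.
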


\begin{proof}
Since $Q_p=Id$ with $d=1$ and $\la_1=1$, the proof of Corollary~\ref{ILTPoint} shows that
$$
S_p^\pm=\underset{j}{{\textstyle \bigcap}}\,\ker(M_j^\pm-Id).
$$
To compute $\dim S_p^\pm$, we use Clifford multiplication~\eqref{CliffM}.
The equation $M_1^+\zeta=\zeta$ shows that any $\zeta\in S_p^+$ has the form
$$
\zeta=\big(-1\otimes u + \vt^1\w\vt^2\otimes v, \vt^1\otimes u+\vt^2\otimes v\big),
$$
for some $u,v\in \C^6$. The remaining equations $M_j^+\zeta=\zeta$ ($j\geq 2$) are equivalent to
$$
R_2u=-iu,\quad R_2v=iv,\qquad R_3u=v,\quad R_3v=-u,\qquad R_4u=-iv,\quad R_4v=-iu.
$$
Since $R_2R_3=-\ep_pR_4$, the solutions $(u,v)$ are characterized as follows:
$$
R_2u=-iu,\ v=R_3u\quad \text{when }\ep_p=1,\quad\text{and}\quad
u=v=0\quad \text{when }\ep_p=-1.
$$
Since the $(\pm i)$-eigenspaces of $R_2$ have dimension $3$, it follows that $\dim S^+_p$ is $3$ if $\ep_p=1$ and $0$ if $\ep_p=-1$.
By the same argument, using $M_1^+=M_1^-$ and $M_j^-=-M_j^+$ ($j\geq 2$), $\dim S^-_p$ is $0$ if $\ep_p=1$ and $3$ if $\ep_p=-1$.
Therefore, by Corollary~\ref{ILTPoint}, the contribution from $Z_\a$ is
\begin{equation*}
\sum_{p\in Z_\a} \ind_p =\sum_{p\in Z_\a} \big(\dim S^+_p- \dim S^-_p\big)=3\sum_{p\in Z_\a} \ep_p =3c_2(M).
\end{equation*}
\end{proof}

By (v) in Section~\ref{Sta}, the contribution from $C$ to $\ind\wh\D$ is $-c_1^2(M)$.
Together with this, Proposition~\ref{CalC0} gives $\ind\D=3c_2(M)-c_1^2(M)$,
completing the computation of all localized contributions.

\appendix

\renewcommand{\theequation}{\thesection.\arabic{equation}}
\setcounter{equation}{0}

\section{Relative homotopy classes}
\label{homotopy}

In this appendix, we recall several standard facts about relative homotopy classes of maps; see, e.g., \cite{H}.
\begin{itemize}
\item[(a)]
By the long exact sequence of relative homotopy groups,
$$
\pi_4(\C^2,\C^{2}-\{0\})\cong \pi_3(S^3)\cong \Z,
$$
where the first isomorphism is induced by the boundary homomorphism given by restriction to $S^3$, and the second is given by the degree.
\item[(b)]
Similarly, for $n\geq 2$,
$$
\pi_4(M_n(\C),GL(n,\C))\cong \pi_3(GL(n,\C))\cong \Z,
$$
where the first isomorphism is the boundary homomorphism, and the second isomorphism is described below.
\end{itemize}

\non
The remaining facts provide an explicit isomorphism $\pi_3(GL(n,\C))\cong \Z$ for $n\geq 2$.

\begin{itemize}
\item[(c)]
Let $X:GL(n,\C)\to M_n(\C)$ be the inclusion $X(A)=A$, and let $\T_n=X^{-1}dX$ denote the Maurer--Cartan form on $GL(n,\C)$. Set
$$
\Om_n:=-\sdfrac{1}{24\pi^2}\tr_n(\T_n^3).
$$
By the Maurer--Cartan equation $d\T_n+\T_n^2=0$ and the graded commutativity of differential forms,
$$
d\tr_n(\T_n^3) =3\tr_n(d\T_n \T_n^2) =-3\tr_n(\T_n^4)=0.
$$
Thus $\Om_n$ is closed.

\item[(d)]
The block inclusion $j_n:SU(2)\hookrightarrow GL(n,\C)$ given by
$j_n(A)= A\oplus I_{n-2}$ induces an isomorphism
$$
(j_n)_*:\pi_3(SU(2))\cong \pi_3(GL(n,\C))\qquad\text{such that}\qquad j_n^*\T_n=(j_2^*\T_2)\oplus0,\quad j_n^*\Om_n=j_2^*\Om_2.
$$
The isomorphism follows from the long exact sequences of homotopy groups associated to the fiber bundles
$$
SU(k)\hookrightarrow SU(k+1)\xr{A\mapsto A_{k+1}} S^{2k+1}\quad(2\leq k\leq n-1),\qquad SU(n)\hookrightarrow U(n)\xr{\det}S^1,
$$
where $A_{k+1}$ is the last column of $A$, together with the facts that
$\pi_3(S^m)=0$ for $m\ne 2,3$, $\pi_4(S^m)=0$ for $m\ne 2,3,4$,  and $U(n)$ is a deformation retract of $GL(n,\C)$.

\item[(e)]
For a smooth map $g:S^3\to GL(n,\C)$, define
\begin{equation}\label{winding}
\tau_n(g):=\int_{S^3}g^*\Om_n=-\sdfrac{1}{24\pi^2}\int_{S^3}\tr_n\big((g^{-1}dg)^3\big).
\end{equation}
Since $\Om_n$ is closed, $\tau_n(g)$ depends only on the homotopy class $[g]$.
Moreover, if $g_i:S^3\to GL(n_i,\C)$ $(i=1,2)$ are smooth maps, then
$$
\tau_{n_1+n_2}(g_1\oplus g_2)=\tau_{n_1}(g_1)+\tau_{n_2}(g_2).
$$
In particular, $\tau_{n+r}(g\oplus I_r)=\tau_n(g)$.

\item[(f)]
Consider the standard identification
\begin{equation}\label{identification}
\Phi:S^3\to SU(2)\quad\text{given by}\quad
\Phi(z_1,z_2)=
\begin{bmatrix}
z_1&z_2\\
-\ov{z}_2&\ov{z}_1
\end{bmatrix}.
\end{equation}
For every smooth map $h:S^3\to SU(2)$, we have
$$
\tau_2(h)=\deg(\Phi^{-1}\circ h).
$$
See Section 6.4 of \cite{Nab}. Consequently, $\tau_2:\pi_3(SU(2))\to\Z$ is an isomorphism.
In particular, $\tau_2(\Phi)=1$.

\item[(g)]
By (d), $\tau_n(j_n\circ h)=\tau_2(h)$. Hence, since $(j_n)_*$ is an isomorphism,
$\tau_n$ takes values in $\Z$, and we obtain the commutative diagram
$$
\xymatrix{
\pi_3(SU(2)) \ar[rr]^{(j_n)_*} \ar[dr]_{\tau_2} && \pi_3(GL(n,\C)) \ar[dl]^{\tau_n}\\
&\Z
}
$$ In particular,
$\tau_n:\pi_3(GL(n,\C))\to\Z$ is an isomorphism.

\end{itemize}


\medskip

\non
School of Data, Mathematical, and Statistical Sciences, University of Central Florida, Orlando, FL 32816

\non
Email: junho.lee\@@ucf.edu


\begin{thebibliography}{99}






\bibitem{GW} C. Gerig and C. Wendl, {\em Generic transversality for unbranched covers of closed pseudoholomorphic curves},
Comm. Pure Appl. Math. {\bf 70} (2017), no. 3, 409–443.

\bibitem{H} A. Hatcher, {\em Algebraic Topology}, Cambridge University Press, Cambridge, 2002.





\bibitem{L} J. Lee, {\em Note on concentration via the conjugate-linear Hodge star operator}, Glas. Mat. {\bf 61} (2026), no. 1, 151--159.

\bibitem{LP} J. Lee and T. Parker, {\em  Spin Hurwitz numbers and the Gromov-Witten invariants of Kähler surfaces},
Comm. Anal. Geom. {\bf 21} (2013), no. 5, 1015–1060.

\bibitem{M1} M. Maridakis, {\em Spinor pairs and the concentration principle for Dirac operators},
Trans. Amer. Math. Soc. {\bf 369} (2017), no. 3, 2231--2254.


\bibitem{M2}  M. Maridakis, {\em A localization theorem for Dirac operators}, arXiv:2110.11654.



\bibitem{Nab} G. Naber, {\em Topology, Geometry and Gauge Fields: Interactions}, 2nd ed., Applied Mathematical Sciences 141, Springer, 2011.







\bibitem{Nag} A. Nagy, {\em  Conjugate linear perturbations of Dirac operators and Majorana fermions},
J. Geom. Anal. {\bf 35} (2025), no. 6, Paper No. 169.

\bibitem{Ni} L. Nicolaescu, {\em Lectures on the geometry of manifolds}, 3rd ed., World Scientific, Hackensack, NJ, 2021.



\bibitem{P} G. Parker, {\em Concentrating Dirac operators and generalized Seiberg-Witten equations},
Math. Res. Lett. {\bf 33} (2026), no. 1, 1–55.




\bibitem{PR} I. Prokhorenkov and K. Richardson, {\em Perturbations of Dirac operators},
J. Geom. Phys. {\bf 57} (2006), no. 1, 297--321.

\bibitem{R} D. Rauch, {\em Perturbations of the d-bar operator}, Doctoral dissertation, Harvard University, 2004.
















\bibitem{T1} C. Taubes, {\em Counting pseudo-holomorphic submanifolds in dimension 4}, J. Diff. Geom. {\bf 44} (1996),
no. 4, 818--893.

\bibitem{T2} C. Taubes, {\em The Seiberg-Witten and Gromov invariants}, Math. Res. Lett. {\bf 2} (1995), no. 2, 221–238.


\bibitem{W} E. Witten, {\em Supersymmetry and Morse theory}, J. Diff. Geom.
{\bf 17} (1982), no. 4,  661--692.








\end{thebibliography}
\end{document}